\documentclass[lettersize,journal]{IEEEtran}
\usepackage{amsmath,amsfonts}
\usepackage{algorithmic}

\usepackage{algorithm}
\usepackage{array}
\usepackage[caption=false,font=normalsize,labelfont=sf,textfont=sf]{subfig}
\usepackage{textcomp}
\usepackage{stfloats}
\usepackage{url}
\usepackage{verbatim}
\usepackage{graphicx}
\usepackage{paralist}
\usepackage{cite}
\usepackage{amssymb}
\usepackage{enumerate}
\usepackage[numbers,sort&compress]{natbib}

\usepackage[figuresright]{rotating}
\usepackage{pdflscape}
\usepackage{amssymb}
\usepackage{latexsym}
\usepackage{longtable}
\usepackage{amsthm}
\usepackage{rotating,multirow,array}
\usepackage{enumerate}
\usepackage{microtype}
\usepackage{xcolor}
\usepackage[T1]{fontenc}    % use 8-bit T1 fonts

\usepackage{booktabs}       % professional-quality tables
\usepackage{nicefrac}       % compact symbols for 1/2, etc.
\usepackage{microtype}      % microtypography
\usepackage{xcolor}         % colors
\usepackage{subfiles}

\usepackage{float}

\theoremstyle{plain}
\newtheorem{theorem}{Theorem}[section]

\theoremstyle{definition}
\newtheorem{definition}[theorem]{Definition}
\newtheorem{assumption}[theorem]{Assumption}
\theoremstyle{remark}
\newtheorem{remark}[theorem]{Remark}

\definecolor{red}{rgb}{0.9,0,0}

\definecolor{blue}{rgb}{0,0,0.9}

\begin{document}
\title{A Distributed Semismooth Newton Based Augmented Lagrangian Method for Distributed Optimization}
\author{Qihao Ma\thanks{Qihao Ma is with School of Mathematics, Southwest Jiaotong University, No.999, Xian Road, West Park, High-tech Zone, Chengdu 611756, China ({3387570049@qq.com}).}, Chengjing Wang\thanks{ Chengjing Wang ({\bf corresponding author}) is with School of Mathematics, Southwest Jiaotong University, No.999, Xian Road, West Park, High-tech Zone, Chengdu 611756, China ({renascencewang@hotmail.com}).}, Peipei Tang\thanks{Peipei Tang is with School of Computer and Computing Science, Hangzhou City University, Zhejiang Key Laboratory of Big Data Intelligent Computing, Hangzhou, 310015, China ({tangpp@hzcu.edu.cn}). This author's research was partly supported by the Zhejiang Provincial Natural Science Foundation of China
under Grant LMS26A010022.}, Dunbiao Niu\thanks{Dunbiao Niu  is with Department of Control
Science and Engineering, College of Electronics and Information Engineering,
National Key Laboratory of Autonomous Intelligent Unmanned Systems,
Tongji University, Shanghai 201804, China ({dunbiaoniu\_sc@163.com}).} and Aimin Xu\thanks{Aimin Xu is with Institute of Mathematics, Zhejiang Wanli University, Ningbo 315100, China ({xuaimin1009@hotmail.com}).}}

\IEEEpubid{}
% Remember, if you use this you must call \IEEEpubidadjcol in the second
% column for its text to clear the IEEEpubid mark.
\maketitle

\begin{abstract}
This paper proposes a novel distributed semismooth Newton based augmented Lagrangian method for solving a class of optimization problems over networks, where the global objective is defined as the sum of locally held cost functions, and communication is restricted to neighboring agents. 
Specifically, we employ the augmented Lagrangian method to solve an equivalently reformulated constrained version of the original problem. Each resulting subproblem is solved inexactly via a distributed semismooth Newton method. By fully leveraging the structure of the generalized Hessian, a distributed accelerated proximal gradient method is proposed to compute the Newton direction efficiently, eliminating the need to communicate with full Hessian matrices. 
Theoretical results are also obtained to guarantee the convergence of the proposed algorithm. Numerical experiments demonstrate the efficiency and superiority of our algorithm compared to state-of-the-art distributed algorithms.
\end{abstract}

\begin{IEEEkeywords}
 Distributed optimization, augmented Lagrangian method, semismooth Newton method, accelerated proximal gradient method
\end{IEEEkeywords}

\section{Introduction}
\label{sec:Introduction}
In recent years, there has been growing interest in developing distributed approaches that leverage networked nodes for data collection, storage, and processing. In response to this issue, distributed optimization models have received increasing attention in the last two decades \cite{dominguez2015distributed, yang2019survey,tsitsiklis1984problems,johansson2008sub}. 
Distributed optimization models arose with the development of multiprocessor systems and decentralized networks. These models enable collaborative problem-solving across multiple computing nodes, effectively addressing challenges such as large-scale data processing, decentralized resource allocation, and privacy preservation. They play a crucial role in enhancing system efficiency, scalability, and fault tolerance across various domains, including machine learning \cite{xia2021penalty}, wireless sensor networks \cite{wan2009event}, power systems \cite{mao2021finite}, resource allocation \cite{deng2018distributed}, and so on. 

%The distributed solution to unconstrained optimization problems involving the sum of smooth and nonsmooth functions is of great importance. Such problems are ubiquitous in various fields. For instance, in machine learning, many models involve a smooth loss function combined with a nonsmooth regularization term (e.g., L1 regularization for sparsity). In image processing, total variation regularization is a common nonsmooth term used for image denoising and reconstruction. Additionally, in operations research, certain resource allocation problems can be formulated with smooth objective functions and nonsmooth constraints. The distributed approach solving the problems can harness parallel computation across multiple nodes to enhance efficiency, making it suitable for large-scale data scenarios and demonstrating robust adaptability and fault tolerance in decentralized networks. Next, we will design an efficient distributed algorithm to address this type of problem.

In this paper, we consider the following distributed optimization problem: 

\begin{equation}\label{eq:problemBx=y1}
    \min\limits_{w\in \mathbb{R}^n}\sum_{i=1}^{m}\Big\{f_i(w)+g_i(w)\Big\},
\end{equation}
where each $f_i: \mathbb{R}^n\to  \overline{\mathbb{R}}$ is a closed proper \(\mu_i\)-strongly convex (see Definition \ref{strongly convex}) and \(L_i\)-smooth  function (see Definition \ref{L-smooth}) privately owned by the $i$-th agent, while each $g_i: \mathbb{R}^n\to  \overline{\mathbb{R}}$ is a closed proper convex but possibly nonsmooth function owned by the $i$-th agent.
In this framework, each agent in a network collaborates to minimize the aggregate of individual objective functions by performing private local computations and exchanging information with neighboring agents. However, each local agent is aware only of its own objective, which is constructed from a portion of the entire dataset.   

Given the broad applicability of problem \eqref{eq:problemBx=y1}, the development of efficient algorithms is of considerable importance. For an efficient distributed algorithm, it is essential to integrate well-designed communication protocols that facilitate efficient information exchange among local agents and their neighbors. Moreover, the local computational schemes used by these agents to process received information must be carefully structured to guarantee that all agents converge to a common optimal solution of problem \eqref{eq:problemBx=y1}.

For the case of \(g_i(w)=0\), $i=1,\cdots, m$, there are many well-known distributed algorithms, such as EXTRA \cite{shi2015extra}, DIGing \cite{nedich2017achieving}, Aug-DGM \cite{xu2015augmented}, Acc-DNGD \cite{qu2019accelerated}, DGD \cite{qu2017harnessing}, DAN \cite{zhang2022distributed}, the distributed SDD Newton method \cite{tutunov2019distributed}, DINN \cite{niu2025dual} and the distributed FGM method \cite{uribe2021dual}. Since the nonsmooth term $g_i(w)$ can naturally express requirements, such as variable selection, physical constraints, or economic penalties, the applicability of these aforementioned distributed algorithms might be relatively limited, as they require the objective function of the problem to be smooth.

%that do not consider the non-smooth term might face limitations in practical applications.

%Among them, the dual inexact nonsmooth Newton (DINN) method proposed in \cite{niu2025dual} is the most efficient. Although the DINN method is efficient, its applicability is relatively limited, as it requires the objective function of the problem to be strongly convex and smooth.
 
For the case of \(g_i(w)\neq0\) for some $i$,  numerous distributed algorithms have emerged over time, including PG-EXTRA \cite{shi2015proximal},  PG-ADMM and its stochastic variant SPG-ADMM \cite{aybat2017distributed}, the modified version of ADMM \cite{mafakheri2023distributed}, the ABC algorithm (a general unified algorithmic framework) \cite{JinmingDistributed}, the fast distributed proximal gradient (FDPG) method \cite{chen2012fast} and the m-PAPG method \cite{yaoliang}. 
A key advantage of these algorithms lies in their simplicity and ease of implementation for distributed computation. Nevertheless, the computational efficiency of first-order algorithms is often not ideal, which is an inherent limitation stemming from their gradient-based nature and convergence properties. 
%Moreover, Although decentralized algorithms have been widely studied by the control community, the lower bound has not been established until 2017 [64] and a distributed dual ascent with a matching upper bound is given in [64].

% the distributed Newton method in the framework of alternating direction method of multipliers (ADMM) \cite{9683317},

The motivation of this paper is to develop a novel distributed semismooth Newton based augmented Lagrangian (DSSNAL) method for solving problem \eqref{eq:problemBx=y1} over an undirected and connected network. Specifically, we apply the augmented Lagrangian method (ALM) to a reformulated version of problem \eqref{eq:problemBx=y1}, which introduces a local variable for each agent and enforces consensus constraints among these local variables. To solve the resulting inner subproblem, we propose a distributed inexact semismooth Newton (DiSSN) method that operates without line search. Furthermore, to circumvent the communication of full Hessian matrices, we utilize a distributed accelerated proximal gradient (DAPG) method to compute the Newton direction. By integrating these components, we establish a distributed iterative scheme with provable convergence guarantees. Moreover, we employ the DAPG method to generate an ideal initial point for the DiSSN method without line search to ensure its global convergence.

In contrast to conventional second-order algorithms, which typically require the objective function to be twice continuously differentiable, our approach relies on relatively milder assumptions. This relaxation significantly broadens its applicability of the proposed algorithm to a wider range of real-world problems. Numerical experiments further demonstrate that our algorithm substantially outperforms the FDPG method \cite{chen2012fast} and the $\mathrm{Prox}$-NIDS method (a special case of the  ABC algorithm) \cite{JinmingDistributed} in terms of computational efficiency. Additionally, the DSSNAL method exhibits broader applicability compared to the DINN method \cite{niu2025dual}.

The contributions of this paper are summarized as follows:

1) We propose a novel DSSNAL method to solve problem \eqref{eq:problemBx=y1}. To our knowledge, this is the first work that successfully integrates the semismooth Newton based augmented Lagrangian (SSNAL) framework into distributed optimization. The proposed algorithm is not only computationally efficient but also naturally aligns with the decentralized structure of the problem, enabling seamless implementation over networks.

2) To solve the inner subproblems, we innovatively employ the DAPG method in two key roles: first, to warm-start the DiSSN phase, accelerating its convergence; and second, to compute the Newton direction without requiring full Hessian communication. This dual application of the DAPG method ensures both numerical efficiency and communication efficiency, facilitating practical and scalable distributed execution.

The rest of the paper is organized as follows. Section \ref{sec:preliminaries} is devoted to preliminary knowledge. In Section \ref{sec:problem statement}, problem \eqref{eq:problemBx=y1} is formally stated with necessary assumptions and reformulated via consensus constraints. The proposed DSSNAL method for this networked problem is detailed in Section \ref{sec:DSSNAL}. Section \ref{sec:Numerical experiments} presents numerical experiments to demonstrate the algorithm’s efficiency. Finally, conclusions are provided in Section \ref{sec:conclusion}.

The notations used in this paper follow conventional mathematical conventions. 
Let $\mathbb{R}^{n\times m}$ denote the space of $n\times m$ real matrices and $\mathbb{R}^n$ the $n$-dimensional real Euclidean space. We use $I_n$ to represent the $n\times n$ identity matrix and $\mathbf{1}_n \in \mathbb{R}^n$ to represent the all-ones column vector. The Euclidean $\ell_{2}$ norm of a vector $x\in\mathbb{R}^{n}$ is denoted by $\|x\|$, while for a matrix $A \in \mathbb{R}^{n\times m}$, $\|A\|$ represents its Frobenius norm.
For any matrix $A \in \mathbb{R}^{n\times m}$, 
$\operatorname{Ran}(A) := \{Ax \mid x \in \mathbb{R}^m\}$ denotes its range space, 
$\operatorname{Null}(A) := \{x \in \mathbb{R}^m \mid Ax = 0\}$ represents its null space. 
Given a set of vectors $\{v_i\}_{i=1}^s$, $\operatorname{Span}\{v_1,\ldots,v_s\}$ indicates their linear span. The Kronecker product between matrices is denoted by $\otimes$. For any real number $a$, $\lceil a \rceil$ signifies the ceiling function (the smallest integer greater than or equal to $a$). The notation $A \succeq B$ means $A-B$ is positive semidefinite.

\section{preliminaries}
\label{sec:preliminaries}

In this section, we provide some preliminary knowledge that will be utilized throughout the paper. 

Let $\mathcal{X}$ and $\mathcal{Y}$ be two real finite-dimensional Euclidean spaces, $p:\mathcal{X}\to\overline{\mathbb{R}}$ be a closed proper convex function. The conjugate function of $p$
is defined by 
\begin{eqnarray*}
p^{*}(v):=\sup_{x \in\operatorname{dom}(p)}\Big\{\langle x,v\rangle - p(x)\Big\}.
\end{eqnarray*}
The proximal mapping $\mathrm{Prox}_{p}(x)$ is defined by
\begin{equation*}
     \operatorname{Prox}_{p}(x):=\mathop{\rm argmin}\limits_{y\in\mathcal{X}}\Big\{p(y)+\frac{1}{2}\|y-x\|^2\Big\}\ ,\ x\in\mathcal{X},
\end{equation*}
which satisfies the Moreau identity (see e.g. \cite[Theorem 31.5]{Rockafellar70}) 
\begin{equation*}
 \mathrm{Prox}_{tp}(x)+t\mathrm{Prox}_{p^{*}/t}(x/t)=x, 
\end{equation*}
where $x\in\mathcal{X}$ and the parameter $t>0$. 

A function $F:\mathcal{X}\rightarrow\mathcal{Y}$ is called directionally differentiable at a point $x\in\mathcal{X}$ in a direction $v\in\mathcal{X}$ if the limit
\begin{eqnarray*}
	F'(x,v):=\lim_{t\downarrow 0}\frac{F(x+tv)-F(x)}{t}
\end{eqnarray*}
exists. If $F$ is directionally differentiable at $x$ in every direction $v\in\mathcal{X}$, we say that $F$ is directionally differentiable at $x$.

Let $\mathcal{U}\subseteq\mathcal{X}$ be an open set and $F:\mathcal{U}\rightarrow \mathcal{Y}$ be a locally Lipschitz continuous function with $\mathcal{U}_{F}$ the subset of $\mathcal{U}$ where $F$ is F(r\'{e}chet)-differentiable. Denote $F'(x)$  the Jacobian of $F$ at $x\in\mathcal{U}_{F}$. It is known from \cite[Theorem 9.60]{Rockafellar98} that the set $\mathcal{U}\setminus\mathcal{U}_{F}$ is negligible. For any $x\in\mathcal{U}$, define the B-subdifferential of $F$ at $x$ by
\begin{align*}
	\partial_{B}F(x):=\Big\{V\ \Big|\ \exists\ x^{k}\rightarrow x\ \mbox{with}\ x^{k}\in\mathcal{U}_{F}&\\ \quad\mbox{and}\ F'(x^{k})\rightarrow V\Big\}.&
\end{align*}
The Clarke subdifferential of $F$ at $x$ is given by $\partial F(x):=\mbox{conv}(\partial_{B}F(x))$, which is the convex hull of $\partial_{B}F(x)$.

Next, we recall three important concepts about the strong convexity and smoothness of functions. 

\begin{definition}
\label{strongly convex}
(\cite{Nesterov2018}) A differentiable function $f:\mathcal{X}\rightarrow \mathbb{R}$ is called $\mu$-\textit{strongly convex} on $\mathcal{X}$ if there exists a constant $\mu>0$ such that for any $x,y\in \mathcal{X}$ we have
\begin{align*}
f(y) \geq f(x)+\langle \nabla f(x),y-x\rangle + \frac{1}{2}\mu\|y-x\|^2. 
\end{align*}
The constant $\mu$ is called the \textit{convexity parameter} of function $f$.

\end{definition}

\begin{definition}\label{semismooth}
 (\cite{QiSun1993}) Let $F: O \subseteq \mathcal{X} \rightarrow \mathcal{Y}$ be a locally Lipschitz continuous function on an open set $O$. $F$ is said to be \textit{(strongly) semismooth} at $x \in O$, if $F$ is directionally differentiable at $x$ and for any $V \in \partial F(x + \Delta x)$ with $\Delta x \rightarrow 0$, 
\[
F(x + \Delta x) - F(x) - V \Delta x = o(\|\Delta x\|) \ (O(\|\Delta x\|^{2})). 
\]
We say that $F$ is a (strongly) semismooth function on $O$, if it is (strongly) semismooth everywhere on $O$.   
\end{definition}

\begin{definition}(\cite{Nesterov2018})\label{L-smooth}
Let $L \geq 0$. A function $f: \mathcal{X} \rightarrow \overline{\mathbb{R}}$ is said to be \textit{$L$-smooth} over a set $D \subseteq \operatorname{dom}f$ if it is differentiable over $D$ and satisfies
\begin{equation}
\| \nabla f(x) - \nabla f(y) \| \leq L \| x - y \| \quad \text{for all } x, y \in D.
\end{equation}

\end{definition}

Let $\mathcal{F}: \mathcal{X} \rightrightarrows \mathcal{Y}$ be a multivalued mapping with its inverse as $\mathcal{F}^{-1}$. Define the graph of $\mathcal{F}$ as
\begin{eqnarray*}
\mathrm{gph}\mathcal{F}:=\{(x,y)\in \mathcal{X}\times\mathcal{Y}\,|\,y\in \mathcal{F}(x)\}.    
\end{eqnarray*}

Now we present  two definitions which are very important for analyzing the convergence rate of our algorithm in Section
\ref{sec:DSSNAL}.

\begin{definition}
(\cite{luque1984asymptotic}) Let $\mathcal{F}: \mathcal{X} \rightrightarrows \mathcal{Y}$ be a multivalued mapping and $y \in \mathcal{Y}$ satisfy $\mathcal{F}^{-1}(y) \neq \emptyset$. $\mathcal{F}$ is said to satisfy the \textit{error bound} condition at $y$ with modulus $\kappa \geq 0$ if there exists $\varepsilon > 0$ such that for every $x \in \mathcal{X}$ with $\operatorname{dist}(y, \mathcal{F}(x)) \leq \varepsilon$,
\begin{equation}
\label{eq:error bound}
\operatorname{dist} \left( x, \mathcal{F}^{-1}(y) \right) \leq \kappa \, \operatorname{dist}(y, \mathcal{F}(x)).
\end{equation}   
\end{definition}

\begin{definition}(\cite{dontchev2009implicit})\label{metric subregularity}
Let $\mathcal{F}: \mathcal{X} \rightrightarrows \mathcal{Y}$ be a multivalued mapping and $(\bar{x}, \bar{y}) \in \operatorname{gph}\mathcal{F}$. $\mathcal{F}$ is said to be \textit{metrically subregular} at $\bar{x}$ for $\bar{y}$ with modulus $\kappa \geq 0$ if there exist neighborhoods $U$ of $\bar{x}$ and $V$ of $\bar{y}$ such that
\begin{equation*}
\label{eq:metric subregularity}
\operatorname{dist}(x, \mathcal{F}^{-1}(\bar{y})) \leq \kappa \, \operatorname{dist}(\bar{y}, \mathcal{F}(x) \cap V) \quad \forall\, x \in U.
\end{equation*} 
\end{definition}

\section{Problem statement}
\label{sec:problem statement}
In this section, we provide a detailed description of the problem under consideration.

\subsection{Problem reformulation}
\label{subsec:problem formulation}
Let $x:=[x^T_1,x^T_2,\dots,x^T_m]^T\in  \mathbb{R}^{mn}$ and      $\tilde{y}:=[\tilde{y}^T_1,\tilde{y}^T_2,\dots,\tilde{y}^T_m]^T\in  \mathbb{R}^{mn}$ be stacked column vectors, where $x_i\in  \mathbb{R}^n $ and  $\tilde{y}_i\in  \mathbb{R}^n$\(\) are local variations at the $i$-th agent. Problem \eqref{eq:problemBx=y1} is written equivalently as 
\begin{equation}
\begin{aligned}
    &\min_{\substack{x \in \operatorname{Ran}(\mathbf{1}_m \otimes \mathbf{I}_n) \\
    \tilde{y} \in \operatorname{Ran}(\mathbf{1}_m \otimes \mathbf{I}_n)
    }} \quad &&\sum_{i=1}^m \Big\{f_i(x_i) + g(\tilde{y}_i)\Big\} \\
    &\quad\quad\operatorname{s.t.} && \quad x = \tilde{y}.
\end{aligned}
\label{eq:promblemWx=0}
\end{equation}

We make the following assumption which is used in the rest of the paper.
\begin{assumption}\label{assump1}
 For every $i\in\{1,\ldots,m\}$, both  \(\nabla f_i\) and  $\mathrm{Prox}_{g_i}$ are strongly semismooth.    
\end{assumption}

\subsection{Model description}
\label{subsec:Model description}
Let \(\mathcal{G}=(\mathcal{V},\mathcal{E})\) be a communication graph of agents, where \(\mathcal{V}=\{1,\dots,m\}\) is the set of identity numbers for $m$ agents, \(\mathcal{E}\subseteq\mathcal{V}\times\mathcal{V}\) is the set of edges,  \((i,j)\in\mathcal{E}\) indicates that the $i$-th agent can communicate with the $j$-th agent. Denote $\mathcal{N}_i=\{j\in\mathcal{V}\,|\,(i,j)\in\mathcal{E}\}$ as the set of neighbors for the $i$-th agent. The graph $\mathcal{G}$ is \textit{undirected} if every edge $(i,j)\in\mathcal{E}$ is undirected. The graph $\mathcal{G}$ is \textit{connected} if for each $i,j\in\mathcal{V}$ $(i\neq j)$ there exists a sequence $\{p_{1},\ldots,p_{k}\}$ with $p_{1}=i$, $p_{k}=j$ and $(p_{t},p_{t+1})\in\mathcal{E}$ for $t=1,\ldots,k-1$. In the following section, we adopt an assumption, which is widely used in distributed optimization (see e.g., \cite{niu2025dual} and \cite{JinmingDistributed}). 

\begin{assumption}\label{assump3}
    The graph \(\mathcal{G}\) is undirected and connected.
\end{assumption}

Then we present the definition of a matrix associated with the graph $\mathcal{G}$, which is useful in designing distributed optimization algorithms.
\begin{definition}\label{defL}
  A matrix \(L \in  \mathbb{R}^{m\times m}\) is called the gossip matrix of a graph \(\mathcal{G}\) if it satisfies the following properties: \label{defL}
\begin{enumerate}
    \item[(a)] \textbf{Symmetry}: \( L^T = L \);     
    \item[(b)] \textbf{Positive semi-definite}: \( L \succeq 0 \);      
    \item[(c)] \textbf{Connectivity}: \( \operatorname{Null}(L) = \operatorname{Span}\{\mathbf{1}_m\} \);         
    \item[(d)] \textbf{Graph induced}: \( L_{ij} \neq 0 \) only if \( i = j \) or \( (i, j) \in \mathcal{E} \).      
   \end{enumerate}  
\end{definition}

Based on Definition \ref{defL}, we can define a gossip matrix as

\begin{align*} 
    W:=L\otimes I_n\in  \mathbb{R}^{nm\times nm}, 
\end{align*}
where $L$ is the gossip matrix given by Definition \ref{defL}. Then we can easily obtain that \(Wx=0\) if and only if \(x_1=\dots=x_m\), i.e., \(\operatorname{Null}(W)=\operatorname{Ran}(\mathbf{1}_m\otimes I_n)\). Thus problem \eqref{eq:promblemWx=0} can be equivalently rewritten as 

\begin{equation}
\begin{split}
&\min_{{x_i\in\mathbb{R}^{n}}\atop{i=1,\ldots,m}} \quad \sum_{i=1}^{m}\Big\{f_i(x_i)+g(y_i)\Big\} \\
&\,\,\operatorname{s.t.}\quad Wx=0,\\
&\phantom{\operatorname{s.t.}\quad}\,\, x_i=y_i,\quad i=1,\dots,m.
\end{split}
\label{eq:promblemx=y}
\end{equation}

For simplicity, we denote $F(x):=\sum\limits_{i=1}^{m}f_i(x_i)$, $G(y):=\sum\limits_{i=1}^{m}g_i(y_i)$ and \(B=\begin{pmatrix}I_{mn}\\W\end{pmatrix}\), where $x=[x_1^T,\dots,x_m^T]^T\in  \mathbb{R}^{mn}$ and $y=[y_1^T,\dots,y_m^T,0^T,\dots0^T]^T\in \mathbb{R}^{2mn}$. Therefore, \eqref{eq:promblemx=y} becomes

\begin{equation}
\begin{split}
&\min_{x \in \mathbb{R}^{mn}}  \quad F(x)+G(y)\\
&\,\,\operatorname{s.t.}\quad\quad Bx=y,
\end{split}
\label{eq:problemBx=y}
\end{equation}
where \(F(x)\) is \(\mu\)-strongly convex and $\nu$-smooth, \(\mu=\min\limits_{1\leq i\leq m}\,\mu_i\) and \(\nu=\max\limits_{1\leq i\leq m}\,L_i\). 

The Lagrangian function of problem \eqref{eq:problemBx=y} is
\begin{equation*}
    l(x,y;\lambda)=F(x)+G(y)- \langle \lambda, Bx-y \rangle.
\end{equation*}
Then we can obtain its Karush-Kuhn-Tucker (KKT) condition 
\begin{align*}
    \begin{array}{cc} 
     \nabla F(x)-B^T\lambda=0,  \\
     Bx-y=0,         \\
     0\in\partial G(y)+\lambda.
\end{array}
\end{align*}

The dual of problem \eqref{eq:problemBx=y} is
\begin{equation}\label{H()}
  \min_{\lambda \in \mathbb{R}^{2mn}}  \quad H(\lambda):=F^*(B^T\lambda)+G^*(-\lambda),  
\end{equation}
where
\begin{align*}
   F^*(z)= \sum_{i=1}^m f^*_i(z_i),  \quad z = \begin{pmatrix}
         z_1 \\
        \vdots \\
        z_{m}
    \end{pmatrix} \in \mathbb{R}^{mn}, 
\end{align*}
\begin{align*}
    G^*(\lambda)= \sum_{i=1}^m g^*_i(\lambda_i),  \quad \lambda = \begin{pmatrix}
         \lambda_1 \\
        \vdots \\
        \lambda_{2m}
    \end{pmatrix} \in \mathbb{R}^{2mn}.
\end{align*}

We can define the operators \( \mathcal{T}_H \) and \( \mathcal{T}_l \) related to the closed proper convex function $H$ and the convex-concave function $l$, respectively by
\begin{align*}
     \mathcal{T}_H(\lambda) :&= \partial H(\lambda),\\
     \mathcal{T}_l(x, y, \lambda) :&= \{(x', y', \lambda') \mid (x', y', -\lambda') \in \partial l(x, y; \lambda)\}.
\end{align*}

\section{DSSNAL method}
\label{sec:DSSNAL}
In this section, we apply the DSSNAL method to solve problem \eqref{eq:problemBx=y}.
\subsection{ALM for problem \eqref{eq:problemBx=y}}
\label{subsec:DSSNAL}
To begin with, we provide the framework of ALM on problem \eqref{eq:problemBx=y}. For fixed \(\sigma>0\) and \(\lambda\in  \mathbb{R}^{2mn}\),  the augmented Lagrangian function associated with problem \eqref{eq:problemBx=y} is
\begin{align*}
    {L}_{\sigma}(x, y; \lambda) \nonumber 
= F(x)+G(y)+\frac{\sigma}{2} \left\| Bx-y-\frac{1}{\sigma}\lambda \right\|^2-\frac{1}{2\sigma}\|\lambda\|^2,
\end{align*}
where  $x\in  \mathbb R^{mn}$ and $y\in \mathbb R^{2mn}$. Then, given a positive sequence $\{\sigma_{k}\}$ with $\sigma_{k}\to +\infty$ as $k\to +\infty$ and choose an initial point $(x^0, y^0, \lambda^0) \in \operatorname{int}(\operatorname{dom} F) \times \operatorname{dom} G \times \mathbb{R}^{2mn}$, ALM for problem \eqref{eq:problemBx=y} is outlined as follows: For $k=0,1,\cdots$,

\begin{subequations}\label{AL_method}
		\begin{align}
&(x^{k+1}, y^{k+1}) \approx \mathop{\rm argmin}_{x,y} \{ \psi_k(x,y) := L_{\sigma_k}(x, y; \lambda^k) \}, \label{eq:subproblem}\\
&\lambda^{k+1} = \lambda^k - \sigma_k(Bx^{k+1} - y^{k+1}).\label{dual_update}
		\end{align}
	\end{subequations}

% \begin{algorithm}[H]\small
% \caption{AL method} \label{alg:SSNAL}
% Given $\sigma_{0} > 0$, choose $(x^0, y^0, \lambda^0) \in \operatorname{int}(\operatorname{dom} F) \times \operatorname{dom} G \times \mathbb{R}^{2mn}$. For $k=0,1,\cdots$, iterate:
% \begin{description}
%     \item [Step 1.] Compute 
%     \begin{equation}
%         (x^{k+1}, y^{k+1}) \approx \mathop{\rm argmin}_{x,y} \{ \psi_k(x,y) := L_{\sigma_k}(x, y; \lambda^k) \}. \label{eq:subproblem}
%     \end{equation}
%     \item [Step 2.] Compute 
%     \begin{equation*}
%         \lambda^{k+1} = \lambda^k - \sigma_k(Bx^{k+1} - y^{k+1})
%     \end{equation*}
%     and update $\sigma_{k+1}=2\sigma_{k}$. If the desired stopping criterion is satisfied, terminate; otherwise, set $k:=k+1$ and go to Step 1.
% \end{description}
% \end{algorithm}

Since the inner subproblem \eqref{eq:subproblem} can't be exactly solved. 
To ensure the computational efficiency of ALM, we employ the following stopping criteria proposed by Rockafellar (1976) for its approximate solution:
\begin{align*}
  \text{(A)} & \ \psi_k(x^{k+1},y^{k+1})-\mathrm{inf}\,\psi_k\leq\frac{\epsilon^2_k}{2\sigma_k},\,\quad\sum_{k=0}^{\infty}\epsilon_k<+\infty,     \\
   \text{(B)} & \ \psi_k(x^{k+1}, y^{k+1}) - \inf \psi_k \leq \frac{\delta_k^2}{2\sigma_k} \|\lambda^{k+1} - \lambda^k\|^2, \\ &\ \sum_{k=0}^{\infty} \delta_k < +\infty,\\
   \text{(C)} & \quad \text{dist}(0, \partial \psi_k(x^{k+1}, y^{k+1})) \leq \frac{\delta'_k}{\sigma_k} \|\lambda^{k+1} - \lambda^k\|, \\
   &\ 0 \leq \delta'_k \rightarrow 0.
\end{align*}

For subproblem \eqref{eq:subproblem}, we omit the superscripts and subscripts when discussing the algorithm itself, and 
define
\begin{align*}
   \phi(x) 
     &:=\min_{y}\psi(x,y)\\
     &=F(x)+G(\mathrm{Prox}_{\frac{1}{\sigma}G}(Bx-\frac{1}{\sigma}\lambda))+\\
     &\ \frac{1}{2\sigma}\|\mathrm{Prox}_{\sigma G^*}(\sigma Bx-\lambda)\|^2-\frac{1}{2\sigma}\|\lambda\|^2, 
\end{align*}
then
\begin{equation}
    \nabla\phi(x)=\nabla F(x)+B^T\mathrm{Prox}_{\sigma G^*}(\sigma Bx-\lambda).\label{eq:nalbaphi}
\end{equation}  

\begin{theorem}
\label{Fproperty}
$\phi(x)$ is $\mu$-strongly convex and $L$-smooth, where $\mu=\min\limits_{1\leq i\leq m}\mu_i$, and $L=\max\limits_{1\leq i\leq m}L_i+\sigma\|B\|^2$.    
\end{theorem}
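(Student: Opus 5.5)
We split $\phi(x)=F(x)+\Theta(Bx)$, where
\[
\Theta(u):=\min_{y}\Big\{G(y)+\frac{\sigma}{2}\big\|u-y-\tfrac{1}{\sigma}\lambda\big\|^2\Big\}-\frac{1}{2\sigma}\|\lambda\|^2 .
\]
Up to the constant term, $\Theta$ is the Moreau envelope of $G$ with parameter $1/\sigma$, evaluated at $u-\lambda/\sigma$. The plan is to show that $\Theta\circ B$ is convex with a $\sigma\|B\|^2$-Lipschitz gradient, and then add the known properties of $F$.

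\textbf{Step 1: properties of $\Theta$.} By standard Moreau envelope theory (see \cite{Rockafellar70}), the envelope of a closed proper convex function is convex and continuously differentiable. Its gradient is $\nabla\Theta(u)=\sigma\big(u-\tfrac{1}{\sigma}\lambda-\mathrm{Prox}_{\frac{1}{\sigma}G}(u-\tfrac{1}{\sigma}\lambda)\big)$. Applying the Moreau identity recalled in Section~\ref{sec:preliminaries} with $t=1/\sigma$, this equals $\mathrm{Prox}_{\sigma G^*}(\sigma u-\lambda)$, which is consistent with \eqref{eq:nalbaphi}. Proximal mappings of closed proper convex functions are firmly nonexpansive, hence $1$-Lipschitz. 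Therefore
\[
\|\nabla\Theta(u)-\nabla\Theta(u')\|\le \|\sigma u-\sigma u'\|=\sigma\|u-u'\|,
\]
so $\nabla\Theta$ is $\sigma$-Lipschitz.

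\textbf{Step 2: composition with $B$.} Because $B$ is linear, $\Theta\circ B$ is convex with gradient $B^T\nabla\Theta(Bx)$. This gives
\[
\|B^T(\nabla\Theta(Bx)-\nabla\Theta(Bx'))\|\le \|B\|\,\sigma\,\|B\|\,\|x-x'\|.
\]
Here we use that the operator norm is at most the Frobenius norm $\|B\|$ used in the paper, so $\Theta\circ B$ is $\sigma\|B\|^2$-smooth.

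\textbf{Step 3: properties of $F$.} Since $F(x)=\sum_i f_i(x_i)$ is separable, summing the strong convexity inequalities of Definition~\ref{strongly convex} for the blocks $x_i$ shows that $F$ is $\mu$-strongly convex with $\mu=\min_i\mu_i$. For smoothness, $\|\nabla F(x)-\nabla F(x')\|^2=\sum_i\|\nabla f_i(x_i)-\nabla f_i(x_i')\|^2\le(\max_i L_i)^2\|x-x'\|^2$, so $F$ is $\max_i L_i$-smooth.

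\textbf{Step 4: conclusion.} Adding a convex function to a $\mu$-strongly convex one preserves $\mu$-strong convexity: the gradient inequality for $\Theta\circ B$ adds to that of $F$. By the triangle inequality, the Lipschitz constants of the two gradients add, giving $L=\max_i L_i+\sigma\|B\|^2$.

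The main obstacle is Step~1: justifying differentiability of the envelope and the gradient formula in this setting where $G$ is possibly extended-valued. We will handle it by invoking the standard Moreau envelope result, together with the Moreau identity, to match the formula \eqref{eq:nalbaphi}. A second, minor subtlety is that the components of $y$ corresponding to the $W$ block are fixed at $0$. This is treated by reading $G$ as including the indicator of $\{0\}$ on those components, which is still closed proper convex, so the argument is unchanged.
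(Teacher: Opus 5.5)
Your proof is correct and follows essentially the paper's argument. Both split $\phi$ into $F$ plus the term whose gradient is $B^T\mathrm{Prox}_{\sigma G^*}(\sigma Bx-\lambda)$, get $\mu$-strong convexity from $F$ plus convexity of that term, and add the Lipschitz constants using nonexpansiveness of $\mathrm{Prox}_{\sigma G^*}$. The main difference is that you take convexity of the second term from convexity of the Moreau envelope, which verifies the function-value inequality of Definition~\ref{strongly convex} directly; the paper instead uses firm nonexpansiveness of the prox to show $\nabla\phi$ is $\mu$-strongly monotone. You also derive \eqref{eq:nalbaphi} through the Moreau identity, which the paper takes as given.
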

\begin{proof}
Since \(f_i\) is $\mu_i$-strongly convex and $F(x)=\sum\limits_{i=1}^mf_i(x_i)$, we can get that $F(x)$ is $\mu$-strongly convex by  Lemma 3.1 in \cite{niu2025dual}. Since $G$ is a proper convex function,  $G^*$ is a closed proper convex function by \cite[Theorem 12.2]{Rockafellar70}. Then by \cite[Theorem 6.42]{Beck2017} the proximal operator $\mathrm{Prox}_{\sigma G^*}$ is firmly non-expansive, i.e., for any $u,v\in \mathbb{R}^{2mn}$,
\begin{align*}
\|\mathrm{Prox}_{\sigma G^*}(u)-\mathrm{Prox}_{\sigma G^*}(v)\|^2 \leq \langle \mathrm{Prox}_{\sigma G^*}(u)-\mathrm{Prox}_{\sigma G^*}(v), u-v \rangle.    
\end{align*}
Thus, in combination of the strong convexity of $F$, we obtain
\begin{align*}
   (\nabla\phi(x) - \nabla\phi(y))^T(x-y)&=(\nabla F(x) - \nabla F(y))^T(x-y)\\&\quad +(\mathrm{Prox}_{\sigma G^*}(\sigma Bx-\lambda) -\\
   &\quad\mathrm{Prox}_{\sigma G^*}(\sigma By-\lambda))^T(Bx-By)\\
   &\geq \mu\|x-y\|^2.
\end{align*}
Thus, $\phi$ is $\mu$-strongly convex.   

Due to \eqref{eq:nalbaphi} and the non-expansiveness of the proximal operator, we can deduce
\begin{align*}
    \|\nabla\phi(x) - \nabla\phi(y)\| &\leq \|\nabla F(x) - \nabla F(y)\| + \\
    &\quad\sigma \|B^T\| \|B\| \|x - y\| \\
    &\leq (\max_{1 \leq i \leq m} L_i + \sigma \|B\|^2) \|x - y\|.
\end{align*}

Thus, we have that \(\phi(x)\) is $L$-smooth.   
\end{proof}

Since the function \(\phi_{k}\) is \(\mu\)-strongly convex, the following inequality holds 
\begin{align*}
    \psi_k(x^{k+1},y^{k+1})-\inf\,\psi_k&= \phi_k(x^{k+1})-\inf\,\phi_k\\
            &\leq\frac{\|\nabla\phi_k(x^{k+1})\|^2 }{2\mu},
\end{align*}
where $y^{k+1}=\operatorname{Prox}_{\frac{1}{\sigma_k}G}(Bx^{k+1}-\frac{1}{\sigma_k}\lambda^k)$, $ (\nabla \phi_k(x^{k+1}), 0) \in \partial \psi_k(x^{k+1}, y^{k+1})$. Thus, we replace the aforementioned stopping criteria (A), (B) and (C) with the following implementable stopping criteria:
\begin{align*}
  \text{(A$'$)} & \ \|\nabla\phi_k(x^{k+1})\|^2\leq\frac{\epsilon^2_k\mu}{\sigma_k},\,\quad\sum_{k=0}^{\infty}\epsilon_k<+\infty,     \\
   \text{(B$'$)} & \ \|\nabla\phi_k(x^{k+1})\|^2 \leq \frac{\delta_k^2\mu}{\sigma_k} \|\lambda^{k+1} - \lambda^k\|^2, \ \sum_{k=0}^{\infty} \delta_k < +\infty,\\
   \text{(C$'$)} & \ \|\nabla\phi_k(x^{k+1})\| \leq\frac{\delta_k'}{\sigma_k}  \|\lambda^{k+1} - \lambda^k\|, \ 0 \leq \delta'_k \rightarrow 0.
\end{align*}

Now, we state the global convergence of ALM %Algorithm \ref{alg:SSNAL}
for problem \eqref{eq:problemBx=y}, which is based on \cite{Rockafellar1976}.
\begin{theorem}\label{convergenceDSSNAL}
Suppose that the solution set of problem \eqref{eq:problemBx=y} is nonempty. Let $\{(x^k,y^k,\lambda^k)\}$ be the infinite sequence generated by ALM %Algorithm \ref{alg:SSNAL}
with stopping criteria (A$'$) and. Let $(x^*, y^*)$ be the unique optimal solution of problem \eqref{eq:problemBx=y}. Then the sequence $\{(x^k,y^k)\}$ converges to the unique optimal solution $(x^*, y^*)$ and $\{\lambda^k\}$ converges to an optimal dual solution $\lambda^*$ of problem \eqref{eq:problemBx=y}. 

\end{theorem}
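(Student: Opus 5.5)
The plan is to deduce the result from Rockafellar's classical theory \cite{Rockafellar1976}: the ALM iteration \eqref{AL_method} is the inexact proximal point algorithm applied to the maximal monotone operator $\mathcal{T}_H=\partial H$ of the dual problem \eqref{H()}. The primal iterates are then recovered from the strong convexity established in Theorem \ref{Fproperty}.

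\textbf{Step 1: a KKT pair exists.} Since the solution set of \eqref{eq:problemBx=y} is nonempty and the constraint $Bx=y$ is linear, the dual \eqref{H()} has an optimal solution with zero duality gap. I would argue this via a Slater-type condition: we need a feasible point with $x\in\operatorname{ri}(\operatorname{dom}F)$ and $y=Bx\in\operatorname{ri}(\operatorname{dom}G)$. The initial point is chosen in $\operatorname{int}(\operatorname{dom}F)\times\operatorname{dom}G$, and $F$ is finite everywhere because it is $L$-smooth, so the relative-interior condition on $F$ is automatic. The condition on $G$ must be assumed or checked through \cite[Corollary 28.2.2]{Rockafellar70}. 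Hence $\mathcal{T}_H^{-1}(0)\neq\emptyset$ and $\mathcal{T}_l^{-1}(0)\neq\emptyset$. Uniqueness of $(x^*,y^*)$ follows from the $\mu$-strong convexity of $F$ together with $y^*=Bx^*$.

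\textbf{Step 2: inexactness.} The computation preceding the theorem already gives
\[
\psi_k(x^{k+1},y^{k+1})-\inf\psi_k\le \frac{\|\nabla\phi_k(x^{k+1})\|^2}{2\mu}\le \frac{\epsilon_k^2}{2\sigma_k}
\]
under (A$'$), so criterion (A) holds. By \cite[Theorem 4]{Rockafellar1976}, criterion (A) makes $\lambda^{k+1}$ an approximate proximal point of $\partial H$ at $\lambda^k$ with error at most $\epsilon_k/\sigma_k$. Because $\sum\epsilon_k<\infty$ and $\sigma_k$ is bounded away from zero, \cite[Theorem 1]{Rockafellar1976} applies, so $\{\lambda^k\}$ is bounded and converges to some $\lambda^*\in\mathcal{T}_H^{-1}(0)$, which is an optimal dual solution.

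\textbf{Step 3: primal convergence, the main obstacle.} Rockafellar's general theorem only gives asymptotic minimization of the primal iterates, so a separate argument is needed. First, $\|\lambda^{k+1}-\lambda^k\|\to0$ and $\sigma_k\to\infty$ (or $\sigma_k$ bounded below) give $\|Bx^{k+1}-y^{k+1}\|=\|\lambda^k-\lambda^{k+1}\|/\sigma_k\to0$. Second, I would use the exact optimality relations. Since $\nabla\phi_k(x^{k+1})=\nabla F(x^{k+1})-B^T\lambda^{k+1}$ with $\lambda^{k+1}=-\mathrm{Prox}_{\sigma_kG^*}(\sigma_kBx^{k+1}-\lambda^k)$, and since $y^{k+1}$ is the exact prox point, we have $-\lambda^{k+1}\in\partial G(y^{k+1})$. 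Combine this with the strong monotonicity of $\nabla F$ and the KKT conditions at $(x^*,y^*,\lambda^*)$:
\[
\mu\|x^{k+1}-x^*\|^2\le\langle \nabla F(x^{k+1})-\nabla F(x^*),x^{k+1}-x^*\rangle .
\]
Substitute $\nabla F(x^{k+1})=B^T\lambda^{k+1}+\nabla\phi_k(x^{k+1})$ and $\nabla F(x^*)=B^T\lambda^*$. Then use the monotonicity of $\partial G$ at the pairs $(y^{k+1},-\lambda^{k+1})$ and $(y^*,-\lambda^*)$, which eliminates the cross term up to $\langle\lambda^{k+1}-\lambda^*,Bx^{k+1}-y^{k+1}\rangle$. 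This yields
\[
\mu\|x^{k+1}-x^*\|^2\le \|\nabla\phi_k(x^{k+1})\|\,\|x^{k+1}-x^*\|+\|\lambda^{k+1}-\lambda^*\|\,\|Bx^{k+1}-y^{k+1}\|.
\]
All terms on the right tend to zero: $\{\lambda^k\}$ is bounded, the feasibility residual vanishes, and (A$'$) forces $\nabla\phi_k(x^{k+1})\to0$. Hence $x^k\to x^*$, and then $y^k=Bx^k-(Bx^k-y^k)\to Bx^*=y^*$.

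The delicate points are the sign conventions relating $\lambda^{k+1}$ to the prox of $G^*$ (through the Moreau identity) and the verification of the constraint qualification in Step 1.
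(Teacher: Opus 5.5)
Your proof is correct. For the multipliers it follows the paper. You convert (A$'$) into Rockafellar's criterion (A) through the strong convexity of $\phi_k$, and \cite[Theorem 4]{Rockafellar1976} then gives $\lambda^k\to\lambda^*$. There is one small slip: Rockafellar's estimate is $\frac{1}{2\sigma_k}\|\lambda^{k+1}-(I+\sigma_k\mathcal{T}_H)^{-1}(\lambda^k)\|^2\le\psi_k(x^{k+1},y^{k+1})-\inf\psi_k$, so the proximal error is at most $\epsilon_k$, not $\epsilon_k/\sigma_k$. Summability is unaffected.

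The routes differ on the primal side. The paper reads convergence of $\{(x^k,y^k)\}$ off the same citation, using that the primal solution set is the singleton $\{(x^*,y^*)\}$. You instead derive
\[
\mu\|x^{k+1}-x^*\|^2\le\|\nabla\phi_k(x^{k+1})\|\,\|x^{k+1}-x^*\|+\|\lambda^{k+1}-\lambda^*\|\,\frac{\|\lambda^{k+1}-\lambda^k\|}{\sigma_k}
\]
from $\nabla\phi_k(x^{k+1})=\nabla F(x^{k+1})-B^T\lambda^{k+1}$, $-\lambda^{k+1}\in\partial G(y^{k+1})$ and the monotonicity of $\partial G$; the signs are right. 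This costs a few lines but gives a self-contained argument. It avoids transporting Rockafellar's primal conclusions from his inequality-constrained format to the composite form $F(x)+G(y)$, $Bx=y$. It is also quantitative, essentially a hand-made version of the bound behind Theorem \ref{convergence2}.

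Your hesitation in Step 1 is well founded and exposes a weakness in the paper's own proof. Feasibility of the zero vector does not verify the relative-interior hypothesis of \cite[Corollary 31.2.1]{Rockafellar70}, and $0$ need not even lie in $\operatorname{dom}G$. Given the structure of $G$ and $W$, that hypothesis reduces to $\bigcap_{i=1}^m\operatorname{ri}(\operatorname{dom}g_i)\neq\emptyset$. This is automatic for real-valued $g_i$, such as the $\ell_1$ terms in the experiments.

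Some such qualification is genuinely needed for the statement. Take $n=m=2$, $g_1$ the indicator of the unit disk centered at $(0,1)^T$, $g_2$ the indicator of the horizontal axis, and $f_1=f_2=\frac12\|\cdot-(1,0)^T\|^2$. The unique solution is $w^*=0$, but $-\nabla(f_1+f_2)(0)=(2,0)^T\notin\partial g_1(0)+\partial g_2(0)=\{0\}\times\mathbb{R}$, so no KKT multiplier exists. Since there is no duality gap here (by coercivity of $F$), the dual has no solution for $\{\lambda^k\}$ to converge to.
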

\begin{proof}
Since the objective function in problem \eqref{eq:problemBx=y1} is strongly convex, the optimal value of problem \eqref{eq:problemBx=y1} is finite. Due to the equivalence between problem \eqref{eq:problemBx=y1} and problem \eqref{eq:problemBx=y}, the optimal value of problem \eqref{eq:problemBx=y} is finite. Since at least the zero vector is a feasible solution of problem \eqref{eq:problemBx=y}, by \cite[Corollary 31.2.1]{Rockafellar70}, we have that the solution set of the dual problem \eqref{H()} is nonempty, and the optimal value of \eqref{H()} is finite and equal to the optimal value of its primal problem \eqref{eq:problemBx=y}. That is, the solution set to the KKT system associated with \eqref{eq:problemBx=y} and \eqref{H()} is nonempty. 
According to \cite[Theorem 4]{Rockafellar1976}, we can obtain the boundedness of $\{\lambda^k\}$ and $\{(x^k,y^k)\}$, and further their convergence results.
\end{proof}

 \begin{theorem}\label{convergence2}
Suppose that the solution set of problem \eqref{eq:problemBx=y} is nonempty. Suppose that $\mathcal{T}_H$ satisfies the error bound condition \eqref{eq:error bound} for the origin with modulus $a_H$. Let $\{(x^k, y^k, \lambda^k)\}$ be any infinite sequence generated by ALM %Algorithm \ref{alg:SSNAL}
with stopping criteria (A$'$) and (B$'$). Then, the sequence $\{\lambda^k\}$ converges to $\lambda^* \in \Omega$ and for all $k$ sufficiently large,
\begin{equation}
\operatorname{dist}(\lambda^{k+1}, \Omega) \leq \theta_k \operatorname{dist}(\lambda^k, \Omega),
\end{equation}
where $\theta_k = (a_H(a_H^2 + \sigma_k^2)^{-1/2} + 2\delta_k)(1 - \delta_k)^{-1} \rightarrow \theta_\infty = a_H(a_H^2 + \sigma_\infty^2)^{-1/2} < 1$, $\sigma_k\rightarrow\sigma_{\infty}$ as $k \rightarrow +\infty$. Moreover, the sequence $\{(x^k, y^k)\}$ converges to the unique optimal solution $(\hat{x}, \hat{y}) \in \operatorname{int}(\operatorname{dom } F) \times \operatorname{dom } G$ to \eqref{eq:problemBx=y}. 

Moreover, if $\mathcal{T}_l$ is metrically subregular at $(x^*, y^*, \lambda^*)$ for the origin with modulus $a_l$ and the stopping criterion (C$'$) is also used, then for all $k$ sufficiently large,
\begin{equation}
\label{eq:xy-convergence rate}
\|(x^{k+1},y^{k+1}) - (x^*, y^*)\| \leq \theta'_k \|\lambda^{k+1} - \lambda^k\|,
\end{equation}
where $\theta'_k = a_l(1 + \delta'_k)/\sigma_k$ with $\lim_{k \rightarrow \infty} \theta'_k = a_l/\sigma_\infty$.
\end{theorem}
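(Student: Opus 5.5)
This theorem is the standard local linear convergence result for the ALM, viewed as an inexact proximal point algorithm. The plan is to reduce it to Rockafellar's results (\cite[Theorems 2 and 5]{Rockafellar1976}) and to the refinement of Luque \cite{luque1984asymptotic}. The key observation is that the ALM iteration \eqref{AL_method} is exactly the proximal point algorithm applied to the maximal monotone operator $\mathcal{T}_H=\partial H$ on the dual \eqref{H()}, with step size $\sigma_k$. Writing $P_k:=(I+\sigma_k\mathcal{T}_H)^{-1}$, the exact ALM update would give $\lambda^{k+1}=P_k(\lambda^k)$. Rockafellar \cite[Proposition 6]{Rockafellar1976} shows that the inexact update satisfies
\[
\|\lambda^{k+1}-P_k(\lambda^k)\|^2\leq 2\sigma_k\big(\psi_k(x^{k+1},y^{k+1})-\inf\psi_k\big).
\]
I will first record that criteria (A$'$) and (B$'$) imply Rockafellar's (A) and (B). This follows from the inequality already derived before the statement, $\psi_k(x^{k+1},y^{k+1})-\inf\psi_k\le\|\nabla\phi_k(x^{k+1})\|^2/(2\mu)$, which rests on the strong convexity of $\phi_k$ from Theorem \ref{Fproperty}. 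Combining the two bounds gives $\|\lambda^{k+1}-P_k(\lambda^k)\|\le\epsilon_k$ and $\|\lambda^{k+1}-P_k(\lambda^k)\|\le\delta_k\|\lambda^{k+1}-\lambda^k\|$.

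With these inexactness bounds in hand, the dual rate follows Luque's argument. Nonemptiness of $\Omega=\mathcal{T}_H^{-1}(0)$ was established in the proof of Theorem \ref{convergenceDSSNAL}, and criterion (A) gives $\lambda^k\to\lambda^*\in\Omega$. Consequently $\operatorname{dist}(0,\mathcal{T}_H(P_k\lambda^k))\le\|\lambda^k-P_k\lambda^k\|/\sigma_k\to0$. For $k$ large this quantity falls below the radius $\varepsilon$ of the error bound \eqref{eq:error bound}. The error bound then gives $\operatorname{dist}(P_k\lambda^k,\Omega)\le (a_H/\sigma_k)\|\lambda^k-P_k\lambda^k\|$.

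The step I expect to be the main obstacle is turning this into the sharp factor $a_H(a_H^2+\sigma_k^2)^{-1/2}$. I will use the firm nonexpansiveness of $P_k$ with respect to $\bar\lambda:=\Pi_\Omega(\lambda^k)$, a fixed point of $P_k$:
\[
\|P_k\lambda^k-\bar\lambda\|^2+\|\lambda^k-P_k\lambda^k\|^2\le\|\lambda^k-\bar\lambda\|^2 .
\]
This must be combined with the error bound, taking care that the error bound is applied to the distance from $P_k\lambda^k$ to $\Omega$ and not to $\bar\lambda$; I will follow Luque's argument for this. The result is $\operatorname{dist}(P_k\lambda^k,\Omega)\le a_H(a_H^2+\sigma_k^2)^{-1/2}\operatorname{dist}(\lambda^k,\Omega)$. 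Next I add the inexactness term. Using $\|\lambda^{k+1}-\lambda^k\|\le \|\lambda^{k+1}-P_k\lambda^k\|+\|P_k\lambda^k-\lambda^k\|$, the bound $\|P_k\lambda^k-\lambda^k\|\le\operatorname{dist}(\lambda^k,\Omega)$, and (B), I rearrange to get $\operatorname{dist}(\lambda^{k+1},\Omega)\le\theta_k\operatorname{dist}(\lambda^k,\Omega)$ with the stated $\theta_k$. Since $\delta_k\to0$, it follows that $\theta_k\to\theta_\infty<1$. Convergence of $(x^k,y^k)$ to the unique solution is inherited from Theorem \ref{convergenceDSSNAL}; uniqueness holds because $F$ is strongly convex and $y=Bx$.

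For the primal estimate \eqref{eq:xy-convergence rate}, I will invoke \cite[Theorem 5]{Rockafellar1976}, which is the analogue for the proximal method of multipliers on $\mathcal{T}_l$. Criterion (C$'$) gives $(\nabla\phi_k(x^{k+1}),0)\in\partial\psi_k(x^{k+1},y^{k+1})$ with norm at most $(\delta'_k/\sigma_k)\|\lambda^{k+1}-\lambda^k\|$, and by construction $\lambda^{k+1}$ satisfies the multiplier relations. Together these yield an element of $\mathcal{T}_l(x^{k+1},y^{k+1},\lambda^{k+1})$ of norm at most $(1+\delta'_k)\sigma_k^{-1}\|\lambda^{k+1}-\lambda^k\|$, where the $\lambda$-component is $(Bx^{k+1}-y^{k+1})$. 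The iterates converge to $(x^*,y^*,\lambda^*)$, so for large $k$ they lie in the neighborhoods $U$ and $V$ of Definition \ref{metric subregularity}. Metric subregularity with modulus $a_l$ then bounds the distance to the KKT set by $a_l(1+\delta'_k)\sigma_k^{-1}\|\lambda^{k+1}-\lambda^k\|$. Since the primal part of that set is the singleton $(x^*,y^*)$, this bound yields \eqref{eq:xy-convergence rate}.
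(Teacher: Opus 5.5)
Your proposal is correct and takes essentially the same route as the paper. The dual rate comes from Rockafellar's inexact proximal point theory together with Luque's error-bound refinement, once you note that (A$'$) and (B$'$) imply (A) and (B) via the strong convexity of $\phi_k$. The primal estimate comes from metric subregularity of $\mathcal{T}_l$ applied to the element $(\nabla\phi_k(x^{k+1}),0,(\lambda^k-\lambda^{k+1})/\sigma_k)\in\mathcal{T}_l(x^{k+1},y^{k+1},\lambda^{k+1})$, which is exactly what the paper's appeal to (4.21) of Rockafellar and criterion (C) amounts to; your firm-nonexpansiveness argument at $\Pi_\Omega(\lambda^k)$ in fact gives the slightly sharper factor $(a_H(a_H^2+\sigma_k^2)^{-1/2}+\delta_k)(1-\delta_k)^{-1}$, which implies the stated $\theta_k$.
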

\begin{proof}
From \cite[Theorem 2.1]{luque1984asymptotic}, \cite[Proposition 7, Theorem 5]{Rockafellar1976} and Theorem \ref{convergenceDSSNAL}, we can obtain the results about the convergence and convergence rate of $\{\lambda^k\}$. Since $\mathcal{T}_l$ is metrically subregular at $(x^*, y^*, \lambda^*)$ for the origin with modulus $a_l$ and $(x^k, y^k, \lambda^k)\rightarrow(x^*, y^*, \lambda^*)$, then for all $k$ sufficiently large, 
\begin{eqnarray*}
&\|(x^{k+1},y^{k+1})-(x^*,y^*)\| + \mathrm{dist}(\lambda^{k+1},\Omega)\\
\leq & a_l\mathrm{dist}(0,\mathcal{T}_l(x^{k+1},y^{k+1},\lambda^{k+1})).
\end{eqnarray*}
Hence, by (4.21) in \cite{Rockafellar1976} and the stopping criterion (C), we have that for all $k$ sufficiently large,
\begin{equation*}
\|(x^{k+1},y^{k+1}) - (x^*, y^*)\| \leq a_l(1 + \delta'_k)/\sigma_k \|\lambda^{k+1} - \lambda^k\|.
\end{equation*}
\end{proof}
\begin{remark}
In fact, from \eqref{eq:xy-convergence rate} we can easily prove that for all $k$ sufficiently large,
\begin{align}
\|(x^{k+1}, y^{k+1}) - (x^*, y^*)\| 
&\leq \theta'_k (1 - \delta_k)^{-1} \operatorname{dist}(\lambda^k, \Omega) \label{(xk+1,yk+1)}
\end{align}
where $\theta'_k (1 - \delta_k)^{-1} \rightarrow a_l/\sigma_\infty$. Therefore, if $\sigma_\infty = \infty$, the Q-superlinear convergence of $\{x^k\}$ and \eqref{(xk+1,yk+1)} further imply the R-superlinear convergence of $\{(x^k, y^k)\}$.
\end{remark}

\subsection{DiSSN method for problem \eqref{eq:subproblem}}
\label{subsec:SSN method}
In this part, we focus on how to solve the inner subproblem \eqref{eq:subproblem}. Note that problem \eqref{eq:subproblem} is equivalent to 
\begin{equation}
    \min_{x} \phi(x).\label{eq:minphi} 
\end{equation}
It follows from Theorem \ref{Fproperty} that $\phi$ is $\mu$-strongly convex and $L$-smooth. Therefore, problem \eqref{eq:minphi} has a unique solution which satisfies the following nonsmooth equation:
\begin{equation*}
    \nabla \phi(x)=0.\label{nonsmoothequation} 
\end{equation*}

First, we define the following multifunction
\begin{equation}
    \hat{\partial}^2\phi(x):=\partial(\nabla F)(x)+\sigma B^T\partial\mathrm{Prox}_{\sigma G^*}(\sigma Bx-\lambda)B, \label{eq:newtonphi}
\end{equation}
which, from \cite{hiriat1984generalized}, has the following property
\begin{equation*}
     \partial^2 \phi(x)(d) = \hat{\partial}^2 \phi(x)(d) \quad \forall \, d \in \operatorname{int}(\operatorname{dom}F).
\end{equation*}
Therefore, all matrices in the set $\hat{\partial}^2\phi(x)$ can be viewed as the generalized Hessian matrix of $\phi$ at the point $x$, which motivates us to apply the DiSSN method to solve problem \eqref{eq:minphi}.

Next, given a sequence $\{\eta_{t}\}$ with $\eta_{t} \to 0$ as $t \to +\infty$. Initialize ${x}^{0} \in \mathbb{R}^{mn}$, the framework of the DiSSN method is outlined as follows: For $t=0,1,\cdots$,
$${x}^{t+1} = {x}^{t} + {d}^t,$$
where $d^t\in\mathbb{R}^{mn}$ is the approximate Newton direction  satisfying the condition
\begin{equation}
    \label{eq:stopping criterion}
    \| M d^t+ \nabla \phi(x^t) \| \leq {\eta_t}\|\nabla \phi(x^t)\|,
    \end{equation}
here
\begin{equation*}
     M:=V+\sigma B^THB\in\hat{\partial}^2 \phi(x^t),\label{eq:M}
\end{equation*} 
with \(V\in\partial(\nabla F)(x^t)\) and \(H\in\partial\mathrm{Prox}_{\sigma G^*}(\sigma Bx^t-\lambda)\).

To get an approximate Newton direction satisfying the condition \eqref{eq:stopping criterion}, the APG method is employed, which can be summarized as the following iterative process: For $j=0,1,\dots$,
\begin{subequations}\label{APG_method}
		\begin{align}
&\tilde{d}^{j} = d^{j} + \beta(d^{j} - d^{j-1}),\label{uPDATA_1}\\
&d^{j+1} = \tilde{{d}}^{j} - \frac{1}{L}(M \tilde{{d}}^{j}+\nabla\phi(x^t)),\label{eq:APG-gradient}
		\end{align}
	\end{subequations}
where $\beta = \frac{\sqrt{L} - \sqrt{\mu}}{\sqrt{L} + \sqrt{\mu}}$ and $d^{0} = d^{-1} = \mathbf{0} \in \mathbb{R}^{mn}$. According to Lemma 4.3 in \cite{niu2025dual}, this algorithm can produce the required Newton direction after the $N(\eta_t)$-th iteration, where
\begin{equation*}
     N(\eta_t):=\Big\lceil\frac{2\mathrm{ln}(\frac{1}{\eta_t}\sqrt{\frac{2L}{\mu}})}{\mathrm{ln}(\frac{1}{1-\sqrt{\frac{\mu}{L}}})}\Big\rceil.\label{eq:firstN}
\end{equation*}
Since the matrix $M\in\hat{\partial}^2\phi(x^t)$ has some special structure, we need to investigate the details about the step update \eqref{eq:APG-gradient} as below.

\begin{theorem}
Assuming Assumptions \ref{assump1} and \ref{assump3} hold, the gradient step \eqref{eq:APG-gradient} in the APG method can be reformulated in a distributed form. For the $i$-th agent $(i=1\dots m)$, the  computation is given as follows:
\begin{align*}
    d_i^{j+1} &= \tilde{{d}}_{i}^{j} - \frac{1}{L} \Biggl[ V_i\tilde{d}_i^{j} + \sigma H_i \tilde{d}_i^{j}+ \sigma \sum_{k \in \mathcal{N}_i} L_{ik} \hat{d}_k^{j}  \\
    & +\nabla f_i(x_i^t) + T_i +\sum_{k \in \mathcal{N}_i} L_{ik} u_{k+m} \Biggr],
    \end{align*} 
where ${V}_{i} \in \partial \nabla f_i(x_i^t)$, $T_i = \mathrm{Prox}_{\sigma g_i^*}(u_i)$, $H_i \in \partial \mathrm{Prox}_{\sigma g_i^*}(u_i)$, $\tilde{{d}}_{i}^{j} = {d}_{i}^{j} + \beta \left( {d}_{i}^{j} - {d}_{i}^{j-1} \right)$, $ u_i = \sigma x_i^t - \lambda_i$, $ u_{i+m} = \sigma \sum_{k \in \mathcal{N}_i} L_{ik} x_k^t - \lambda_{i+m}$ and $\hat{d}_i^{j} = \sum_{k \in \mathcal{N}_i} L_{ik} \tilde{d}_k^{j} $.
\end{theorem}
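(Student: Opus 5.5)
The plan is to substitute the explicit forms of $M=V+\sigma B^THB$ and of $\nabla\phi(x^t)$ from \eqref{eq:nalbaphi} into \eqref{eq:APG-gradient}, and then read off the $i$-th $n$-dimensional block of the resulting vector. Every matrix involved is either block diagonal, with blocks indexed by agents, or a Kronecker product $L\otimes I_n$. By property (d) of Definition \ref{defL}, multiplying by $L\otimes I_n$ only mixes an agent's data with that of its neighbours. Here I read $\sum_{k\in\mathcal{N}_i}$ as also including $k=i$, since $L_{ii}\neq 0$ in general. So each block formula needs only local data and neighbour data, which is exactly the statement.

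First I would settle the structure of $G^*$ and $\mathrm{Prox}_{\sigma G^*}$ on $\mathbb{R}^{2mn}$. The last $m$ blocks of $y$ are forced to be $0$, so $G$ contains the indicator of $\{0\}$ on those blocks. Its conjugate vanishes there, and hence $\mathrm{Prox}_{\sigma G^*}$ acts as the identity on the last $m$ blocks. On the first $m$ blocks $G^*$ is separable, so $\mathrm{Prox}_{\sigma G^*}(u)=(\mathrm{Prox}_{\sigma g_1^*}(u_1),\dots,\mathrm{Prox}_{\sigma g_m^*}(u_m),u_{m+1},\dots,u_{2m})$, where $u=\sigma Bx^t-\lambda$. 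Writing $u$ blockwise gives $u_i=\sigma x_i^t-\lambda_i$ and $u_{i+m}=\sigma(Wx^t)_i-\lambda_{i+m}=\sigma\sum_{k\in\mathcal{N}_i}L_{ik}x_k^t-\lambda_{i+m}$. For the same reason, and because $F$ is separable, I may choose $V=\mathrm{diag}(V_1,\dots,V_m)$ with $V_i\in\partial\nabla f_i(x_i^t)$ and $H=\mathrm{diag}(H_1,\dots,H_m,I_n,\dots,I_n)$ with $H_i\in\partial\mathrm{Prox}_{\sigma g_i^*}(u_i)$. Assumption \ref{assump1} guarantees these elements exist and are meaningful. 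A product of B-subdifferential elements of the components lies in the B-subdifferential of a separable map, and hence in the Clarke one; I use this to justify that the block-diagonal choice is a legitimate element.

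Next comes the block computation. Since $B^THB=H_{1:m}+W^TW$, $W^T=W$ and $W=L\otimes I_n$, the $i$-th block of $\sigma B^THB\tilde d^j$ is
$$\sigma H_i\tilde d_i^j+\sigma\sum_{k\in\mathcal{N}_i}L_{ik}\hat d_k^j,\qquad \hat d_k^j=\sum_{l\in\mathcal{N}_k}L_{kl}\tilde d_l^j.$$
Similarly, the $i$-th block of $B^T\mathrm{Prox}_{\sigma G^*}(u)$ is $T_i+\sum_{k\in\mathcal{N}_i}L_{ik}u_{k+m}$, and the $i$-th block of $\nabla F(x^t)$ is $\nabla f_i(x_i^t)$. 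The momentum step \eqref{uPDATA_1} is already blockwise. Putting these together gives the claimed formula for $d_i^{j+1}$. I would also remark on communication: $\hat d_i^j$ takes one round of neighbour exchange, $\sum_k L_{ik}\hat d_k^j$ takes a second, and $u_{i+m}$ together with $\sum_k L_{ik}u_{k+m}$ are computed once per outer step.

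I expect the main obstacle to be the justification step rather than the algebra. One part is arguing that the last $m$ blocks of $\mathrm{Prox}_{\sigma G^*}$ are the identity, which the paper's notation $G^*(\lambda)=\sum g_i^*(\lambda_i)$ over $2m$ blocks leaves implicit. The other part is showing that the block-diagonal $H$, including the identity blocks, is an admissible element of $\partial\mathrm{Prox}_{\sigma G^*}$. I would handle both by writing $G(y)=\sum_{i\le m}g_i(y_i)+\delta_{\{0\}}(y_{m+1:2m})$ explicitly and appealing to separability.
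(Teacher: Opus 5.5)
Your proposal is correct and follows essentially the same route as the paper. Both arguments write $\mathrm{Prox}_{\sigma G^*}$ blockwise with the identity on the last $m$ blocks, take $V$ and $H$ block diagonal, expand $M\tilde d^j=V\tilde d^j+\sigma\,\mathrm{diag}(H_1,\dots,H_m)\tilde d^j+\sigma W^2\tilde d^j$ with $W^2\tilde d^j=W\hat d^j$, and read off the $i$-th block together with the $i$-th block of $\nabla\phi(x^t)$.

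Your additions make explicit what the paper's proof uses implicitly: writing the zero constraint as $\delta_{\{0\}}$ explains the identity blocks, and the sums over $\mathcal{N}_i$ must include $k=i$ because $L_{ii}>0$. To cover Clarke elements $H_i$, and not only B-subdifferential ones, add one line: the convex hull of a block-diagonal product set equals the product of the convex hulls.
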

\begin{proof}
 From \eqref{eq:problemBx=y}, we note that
\[
\partial(\nabla F)(x^t) = \left\{ \begin{array}{l}
V = \begin{pmatrix}
V_1 & \cdots & 0 \\
\vdots & \ddots & \vdots \\
0 & \cdots & V_m
\end{pmatrix} \in \mathbb{R}^{nm \times nm} \, \Bigg| \\
V_i \in \partial(\nabla f_i)(x_i^t),\ \forall i = 1, \cdots, m
\end{array} \right\}.
\]
Meanwhile,
\begin{align}
\mathrm{Prox}_{\sigma G^*}(u) &=\mathop{\rm argmin}_{\lambda\in\mathbb{R}^{2mn}}\Big\{\sum_{i=1}^mg^*_i(\lambda_i)+\frac{1}{2\sigma}\sum_{i=1}^{2m}\|\lambda_i-u_i\|^2\Big\}\nonumber\\
    &=\begin{pmatrix}
        \mathrm{Prox}_{\sigma g^*_1}(u_1)\\
        \vdots\\
        \mathrm{Prox}_{\sigma g^*_m}(u_m)\\
        u_{m+1}\\
        \vdots\\
         u_{2m}
    \end{pmatrix},
    \label{eq:Prox}
\end{align}
where $u = \sigma Bx^t - \lambda$.
In addition,
\begin{align*}
   \partial\mathrm{Prox}_{\sigma G^*}(u)= \begin{pmatrix}
 \partial\mathrm{Prox}_{\sigma g_1^*}(u_1) &  &  &\\
 & \ddots &  &\\
 &  & \partial\mathrm{Prox}_{\sigma g_m^*}(u_m)&\\
 &&& I_{mn}
\end{pmatrix}.
\end{align*}
From \eqref{eq:nalbaphi}, \eqref{eq:Prox} and the structure of $B$, we have
\begin{align*}
   \nabla\phi(x^t) =\begin{pmatrix}
       \nabla f_1(x_1^t)\\ \vdots\\ \nabla f_m(x_m^t)
   \end{pmatrix}+\begin{pmatrix}
        \mathrm{Prox}_{\sigma g^*_1}(u_1)\\
        \vdots\\
        \mathrm{Prox}_{\sigma g^*_m}(u_m)
    \end{pmatrix}+W\begin{pmatrix}
        u_{m+1}\\
        \vdots\\
        u_{2m}
    \end{pmatrix}.
\end{align*}
Combining the block graph structure of $W$ as described in Definition \ref{defL} (d), the expression for $u$ is
\begin{align*}
    \begin{pmatrix}
        u_1\\
        \vdots\\
        u_m\\
        u_{m+1}\\
        \vdots\\
        u_{2m}
    \end{pmatrix}&= \begin{pmatrix}
        \sigma x_1^t-\lambda_1\\
        \vdots\\
        \sigma x_m^t-\lambda_m\\
        \sigma\sum_{l \in \mathcal{N}_1} L_{1l}x_l^t-\lambda_{m+1}\\
        \vdots\\
        \sigma\sum_{l \in \mathcal{N}_m} L_{ml}x_l^t-\lambda_{2m}
    \end{pmatrix}.
\end{align*}
Since \(H\in\partial\mathrm{Prox}_{\sigma G^*}(u)\), we obtain
\begin{align*}
    H=\begin{pmatrix}
 H_1 &  &  &\\
 & \ddots &  &\\
 &  & H_m&\\
 &&& I_{mn}
\end{pmatrix},
\end{align*}
where $H_i\in \partial\mathrm{Prox}_{\sigma g_i^*}(u_i)$. Next, we present the expanded form of $M\tilde{{d}}^{j}$ as below. 
\begin{align*}
   M \tilde{{d}}^{j}&=V\tilde{{d}}^{j}+\sigma B^THB\tilde{{d}}^{j}\\ 
                  &=\begin{pmatrix}
                      V_1\tilde{{d}}^{j}_1\\ \vdots\\  V_m\tilde{{d}}^{j}_m
                  \end{pmatrix}+\sigma\begin{pmatrix}
                      H_1\tilde{{d}}^{j}_1\\ \vdots\\  H_m\tilde{{d}}^{j}_m
                  \end{pmatrix}+\sigma W^2\tilde{{d}}^{j}.
\end{align*}
Considering the structure of the matrix $W$, we get
\begin{align*}
   W^2\tilde{{d}}^{j}&=W\begin{pmatrix}
        \hat{d}^j_1\\ \vdots\\ \hat{d}^j_m
    \end{pmatrix}=\begin{pmatrix}
                    \sum_{t \in \mathcal{N}_1} L_{1t}\hat{{d}}^{j}_t\\
        \vdots\\
        \sum_{t \in \mathcal{N}_m} L_{mt}\hat{{d}}^{j}_t 
                 \end{pmatrix},
\end{align*}
where
\begin{align*}
    \begin{pmatrix}
        \hat{d}^j_1\\ \vdots\\ \hat{d}^j_m
    \end{pmatrix}&=W\tilde{{d}}^{j}=\begin{pmatrix}
                    \sum_{t \in \mathcal{N}_1} L_{1t}\tilde{{d}}^{j}_t\\
        \vdots\\
        \sum_{t \in \mathcal{N}_m} L_{mt}\tilde{{d}}^{j}_t 
        \end{pmatrix}.  
\end{align*}
Thus, we obtain 
\begin{align*}
    d_i^{j+1} &= \tilde{{d}}_{i}^{j} - \frac{1}{L} \Biggl[ V_i\tilde{d}_i^{j} + \sigma H_i \tilde{d}_i^{j}+ \sigma \sum_{k \in \mathcal{N}_i} L_{ik} \hat{d}_k^{j}  \\
    & +\nabla f_i(x_i^t) + T_i +\sum_{k \in \mathcal{N}_i} L_{ik} u_{k+m} \Biggr],
    \end{align*} 
where ${V}_{i} \in \partial \nabla f_i(x_i^t)$, $T_i = \mathrm{Prox}_{\sigma g_i^*}(u_i)$, $H_i \in \partial \mathrm{Prox}_{\sigma g_i^*}(u_i)$ and $\hat{d}_i^{j} = \sum_{k \in \mathcal{N}_i} L_{ik} \tilde{d}_k^{j}$.
\end{proof}
 From the above analysis, both $V\in\partial(\nabla F)(x^t)$ and $H\in\partial{\mathrm{Prox}}_{\sigma G^*}(\sigma Bx^t-\lambda)$ have the block diagonal structure. In addition, $W$ has the special structure. Thus, the computation in the APG method can be efficiently implemented in a distributed manner. Now we present the DiSSN method at the $i$-th agent as follows:

\begin{algorithm}[H]
\caption{DiSSN method for problem \eqref{eq:minphi} at the $i$-th agent} \label{alg:SSN}
Given $\beta = \frac{\sqrt{L} - \sqrt{\mu}}{\sqrt{L} + \sqrt{\mu}}$ and a sequence $\{\eta_{t}\}$ with $\eta_{t} \to 0$ as $t \to +\infty$. Initialize ${x}_{i}^{0} \in \mathbb{R}^{n}$. For $t=0,1,...$, iterate:
\begin{description}
    \item [Step 1.]\ Compute 
    \begin{align*}
    u_i^t &= \sigma x_i^t - \lambda_i, \\
    u_{i+m}^t &= \sigma \sum_{k \in \mathcal{N}_i} L_{ik} x_k^t - \lambda_{i+m}, 
    \end{align*} 
    and share ${u}_{i+m}^t$ with neighbors in $\mathcal{N}_{i}$.
    \item [Step 2.]\ Choose ${V}_{i}^t \in \partial \nabla f_i(x_i^t)$, $T_i^t = \mathrm{Prox}_{\sigma g_i^*}(u_i^t)$, $H_i^t \in \partial \mathrm{Prox}_{\sigma g_i^*}(u_i^t)$.
    \item [Step 3.]\ Set  $d^{0}_i = d^{-1}_i = \mathbf{0} \in \mathbb{R}^{n}$ and $N_t=N(\eta_t)$. For $j = 0, 1, \ldots, N_t$, iterate:
    \begin{enumerate}
        \item  Compute 
        \begin{equation*}
            \tilde{{d}}_{i}^{j} = {d}_{i}^{j} + \beta \left( {d}_{i}^{j} - {d}_{i}^{j-1} \right)
        \end{equation*} 
        and share it with neighbors in $\mathcal{N}_{i}$. 
        \item  Compute 
        \begin{equation*}
        \hat{d}_i^{j} = \sum_{k \in \mathcal{N}_i} L_{ik} \tilde{d}_k^{j} 
        \end{equation*}
        and share it with neighbors in $\mathcal{N}_{i}$.
        \item  Compute 
         \begin{align*}
    d_i^{j+1} &= \tilde{{d}}_{i}^{j} - \frac{1}{L} \Biggl[ V_i^t\tilde{d}_i^{j} + \sigma H_i^t \tilde{d}_i^{j}+ \sigma \sum_{k \in \mathcal{N}_i} L_{ik} \hat{d}_k^{j} \\
    & +\nabla f_i(x_i^t) + T_i^t+\sum_{k \in \mathcal{N}_i} L_{ik} u_{k+m}^t \Biggr].
    \end{align*} 
    \end{enumerate}

    % \item [Step 1.] Select $M^t\in\hat{\partial}^2 \phi(x^t)$. Solve the following linear system by Algorithm \ref{alg:distributed APG} 
    % \begin{equation}
    %   M^t d = -\nabla \phi(x^t)
    % \end{equation}
    % to find $d^{t}$ such that $\| M^t {d}^{t}+ \nabla \phi(x^t) \| \leq {\eta_t}\|\nabla \phi(x^t)\|$.
    \item [Step 4.]\ Update ${x}_{i}^{t+1} = {x}_{i}^{t} + {d}_{i}^{N_t+1}$.
    \item [Step 5.]\ If the desired stopping criterion is satisfied, terminate; otherwise, set $t:=t+1$ and go to Step 1.
\end{description}
\end{algorithm}

Based on Theorem 7.5.5 in \cite{facchinei2003finite}, we present the convergence result of Algorithm \ref{alg:SSN}.

\begin{theorem}\label{DINN convergence}
  Suppose Assumptions \ref{assump1} and \ref{assump3} hold. Let \(\{\eta_t\}\) be a sequence such that \(\eta_t \to 0\) as \(t \to \infty\). There
 exists a neighborhood $\mathbb{B}(\hat{x},\delta)$, where $\hat{x}$ is the solution of the subproblem \eqref{eq:minphi}, such that for any initial point $x^0\in \mathbb{B}(\hat{x},\delta)$, the sequence $\{x^t\}$ generated by Algorithm \ref{alg:SSN} superlinearly converges to $\hat{x}$. In addition, if for some $\tilde{\eta}$, $\eta_t\leq\tilde{\eta}\|\nabla\phi(x^t)\|$ for all $t$, then the sequence $\{x^t\}$ quadratically converges to $\hat{x}$.
\end{theorem}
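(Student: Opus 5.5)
The plan is to recast Algorithm \ref{alg:SSN} as an inexact semismooth Newton method for $\nabla\phi(x)=0$ and then apply the local theory of \cite[Theorem 7.5.5]{facchinei2003finite}. By the preceding theorem, Steps 1--3 compute exactly the centralized APG iterates \eqref{APG_method} applied to $Md=-\nabla\phi(x^t)$. Here $M=V+\sigma B^THB\in\hat\partial^2\phi(x^t)$. By Lemma 4.3 of \cite{niu2025dual}, after $N_t=N(\eta_t)$ iterations the output $d^t$ satisfies \eqref{eq:stopping criterion}. Thus $x^{t+1}=x^t+d^t$ with $\|Md^t+\nabla\phi(x^t)\|\le\eta_t\|\nabla\phi(x^t)\|$, which is precisely the inexact Newton scheme covered by that theorem. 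What remains is to verify its two hypotheses: strong semismoothness of $\nabla\phi$ at $\hat x$, and uniform nonsingularity of the generalized Jacobians near $\hat x$.

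\textbf{Semismoothness.} By \eqref{eq:nalbaphi}, $\nabla\phi=\nabla F+B^T\mathrm{Prox}_{\sigma G^*}(\sigma B\cdot-\lambda)$. By Assumption \ref{assump1}, each $\nabla f_i$ is strongly semismooth. By the Moreau identity, $\mathrm{Prox}_{\sigma g_i^*}(u)=u-\sigma\mathrm{Prox}_{g_i/\sigma}(u/\sigma)$. I will argue that strong semismoothness of $\mathrm{Prox}_{g_i}$ transfers to the scaled prox used here, or else read Assumption \ref{assump1} as imposed on the relevant prox. By \eqref{eq:Prox}, the remaining blocks are the identity. Strong semismoothness is preserved under composition with affine maps, sums and block stacking, so $\nabla\phi$ is strongly semismooth. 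One technical point is that the method uses $\hat\partial^2\phi$ rather than the Clarke Jacobian $\partial(\nabla\phi)$. I will handle this by noting that the semismoothness estimate $\nabla\phi(x)-\nabla\phi(\hat x)-M(x-\hat x)=O(\|x-\hat x\|^2)$ holds uniformly for $M\in\hat\partial^2\phi(x)$. This follows by applying the semismoothness estimates blockwise to $V\in\partial(\nabla F)(x)$ and $H\in\partial\mathrm{Prox}_{\sigma G^*}(\sigma Bx-\lambda)$ and then combining them through $B$. That is exactly the property the Facchinei--Pang argument uses.

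\textbf{Nonsingularity.} Every $V\in\partial(\nabla F)(x)$ satisfies $V\succeq\mu I$, since $F$ is $\mu$-strongly convex, and $\|V\|\le\nu$. Every $H\in\partial\mathrm{Prox}_{\sigma G^*}(\cdot)$ is symmetric with $0\preceq H\preceq I$, because the prox is firmly nonexpansive and is the gradient of a convex function. Hence $\mu I\preceq M\preceq LI$ with $L$ as in Theorem \ref{Fproperty}, so $\|M^{-1}\|\le1/\mu$ uniformly. This gives BD-regularity, and in fact more.

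\textbf{Local rate.} With $e^t=x^t-\hat x$, I will write
\begin{align*}
e^{t+1}=M^{-1}\bigl[(Md^t+\nabla\phi(x^t))-(\nabla\phi(x^t)-\nabla\phi(\hat x)-Me^t)\bigr].
\end{align*}
Since $\|\nabla\phi(x^t)\|\le L\|e^t\|$, this yields
\begin{align*}
\|e^{t+1}\|\le\mu^{-1}\bigl(\eta_tL\|e^t\|+o(\|e^t\|)\bigr).
\end{align*}
Choosing $\delta$ so that the bracket is at most $\tfrac{\mu}{2}\|e^t\|$, and using boundedness of $\eta_t$ in the early steps (take $\sup_t\eta_t$ small enough, or shrink $\delta$), gives invariance of $\mathbb{B}(\hat x,\delta)$ and convergence. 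Since $\eta_t\to0$, the rate is superlinear. If $\eta_t\le\tilde\eta\|\nabla\phi(x^t)\|\le\tilde\eta L\|e^t\|$, strong semismoothness makes both terms $O(\|e^t\|^2)$, which gives quadratic convergence. The main obstacle I expect is the semismoothness step: transferring the assumption from $\mathrm{Prox}_{g_i}$ to $\mathrm{Prox}_{\sigma g_i^*}$, and justifying the use of $\hat\partial^2\phi$ in place of $\partial(\nabla\phi)$. I will resolve the latter via the blockwise estimate rather than through an equality of the two multifunctions.
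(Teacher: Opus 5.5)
Your proposal is correct and takes the same route as the paper: strong semismoothness of $\nabla\phi$ via the composition rule (Facchinei--Pang Proposition 7.5.18), followed by the local inexact semismooth Newton theorem (Facchinei--Pang Theorem 7.5.5). You also supply what the paper's two-line proof leaves implicit, and you rightly flag that Assumption~\ref{assump1} really has to hold for (or be transferred to) the scaled map $\mathrm{Prox}_{\sigma g_i^*}$, which the paper also glosses over; the implicit pieces you add are:
\begin{itemize}
\item the uniform bound $\mu I\preceq M\preceq LI$ for every $M=V+\sigma B^THB$, which gives nonsingularity;
\item the blockwise argument showing that $\hat\partial^2\phi$ is a strong linear Newton approximation without needing equality with $\partial(\nabla\phi)$;
\item the handling of possibly large early $\eta_t$ by shrinking $\delta$.
\end{itemize}
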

\begin{proof}
By Assumption \ref{assump1},  Since $f_i$ and $\mathrm{Prox}_{g_i^*}$ ($i=1,\cdots,m$) are strongly semismooth at the solution $\hat{x}$, by Proposition 7.5.18 of \cite{facchinei2003finite} we have $\nabla \phi$ is strongly semismooth at the solution $\hat{x}$. Based on Theorem 7.5.5 in \cite{facchinei2003finite}, we have the conclusion.   
\end{proof}

\subsection{Initialization for DiSSN method}
\label{sec:Initialization for Distributed Inexact Newton Method }

Although the DiSSN method is an efficient algorithm, it converges locally. To attain global convergence, the backtracking line search technique is often combined in the centralized Newton method. However, in the distributed algorithm, all agents must cooperate to compute the global objective function value, which means that the backtracking line search could lead to excessive communication between agents, potentially impacting the overall efficiency of the algorithm. To mitigate this issue, we may adopt the DAPG method to generate an initial point in $\mathbb {B}(\hat{x},\delta)$ for Algorithm \ref{alg:SSN}.

\begin{algorithm}                     
\caption{DAPG method at the $i$-th agent}  
\label{alg:DDFGM}   
Initialize $x_i^0 = x_i^{-1} = \mathbf{0} \in \mathbb{R}^n$. Set the parameter $\beta = \frac{\sqrt{L} - \sqrt{\mu}}{\sqrt{L} + \sqrt{\mu}}$.  For $j = 0, 1, \ldots$, iterate:
\begin{description}
    \item [Step 1.]\, Compute 
        \begin{equation*}
        \tilde{x}_i^j = x_i^j + \beta(x_i^j - x_{i-1}^j), 
        \end{equation*}
        share it with neighbors in $\mathcal{N}_i$.
    \item [Step 2.]\,  Compute 
        \begin{align*}
        u_i^j &= \sigma x_i - \lambda_i, \\
        u_{i+m}^j &= \sigma \sum_{t \in \mathcal{N}_i} L_{it} \tilde{x}_t^j - \lambda_{i+m},  
        \end{align*}
        and share $u_{i+m}^j$ with neighbors in $\mathcal{N}_{i}$.
    \item [Step 3.]\, Compute $T_i^j \in \mathrm{Prox}_{\sigma g_i^*}(u_i^j).$ 
    \item [Step 4.]\,  Compute 
        \begin{align*}
        x_i^{j+1} &= \tilde{x}_i^j - \frac{1}{L} \left[ \nabla f_i(\tilde{x}_i^j) + T_i^j + \sum_{t \in \mathcal{N}_i} L_{it} u_{t+m}^j \right]. 
        \end{align*}
    \item [Step 5.]\, If the desired stopping criterion is satisfied, terminate; otherwise, set $j:=j+1$ and go to Step 1.
\end{description}
\end{algorithm}

To ensure the convergence of Algorithm \ref{alg:SSN}, it is important and meaningful to estimate the iteration complexity of Algorithm \ref{alg:DDFGM}. Now we present the following theorem about the estimation.
\begin{theorem}
If $\{x^j\}, j=0,1,\cdots,$ is a sequence generated by Algorithm \ref{alg:DDFGM}, then for any $j$, we have
\begin{equation}
\label{eq:iteration-complexity}
    \|x^j-\hat{x}\|^2\leq\frac{L+\mu}{\mu}\|x^0-\hat{x}\|^2e^{-j\sqrt{\frac{\mu}{L}}}.
\end{equation} 
Furthermore, if $j>N:=\lceil {\sqrt{\frac{L}{\mu}}\mathrm{ln}(\frac{L+\mu}{\mu\delta^2}\|x^0-\hat{x}\|^2)}\rceil$, $x^j$ enters the convergence neighborhood with the radius $\delta$.
\end{theorem}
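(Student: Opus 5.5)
Algorithm \ref{alg:DDFGM} is exactly Nesterov's accelerated gradient method with constant momentum $\beta=\frac{\sqrt{L}-\sqrt{\mu}}{\sqrt{L}+\sqrt{\mu}}$ and step $1/L$, applied to $\phi$ and written agent by agent. The plan is therefore to (i) identify the iteration with the centralized scheme $x^{j+1}=\tilde x^j-\frac1L\nabla\phi(\tilde x^j)$, $\tilde x^j=x^j+\beta(x^j-x^{j-1})$; (ii) invoke the standard linear-rate estimate for this scheme on a $\mu$-strongly convex, $L$-smooth function, using Theorem \ref{Fproperty}; and (iii) convert that estimate into the iteration count $N$.

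For step (i), I would reuse the block computation from the proof of the distributed reformulation theorem above. With $u=\sigma B\tilde x^j-\lambda$, formula \eqref{eq:nalbaphi} together with \eqref{eq:Prox} gives
\[
(\nabla\phi(\tilde x^j))_i=\nabla f_i(\tilde x_i^j)+\mathrm{Prox}_{\sigma g_i^*}(u_i)+\sum_{t\in\mathcal N_i}L_{it}u_{t+m},
\]
which is precisely the bracket in Step 4. Here I read $u_i^j$ as $\sigma\tilde x_i^j-\lambda_i$ and $x_{i-1}^j$ in Step 1 as $x_i^{j-1}$, both evident typos. The iterates thus coincide with the centralized accelerated gradient sequence on $\phi$.

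For step (ii), I would use the estimate-sequence bound for the constant-momentum scheme (Nesterov 2018, Theorem 2.2.3 / scheme (2.2.22)):
\[
\phi(x^j)-\phi(\hat x)\le\Big(1-\sqrt{\tfrac{\mu}{L}}\Big)^j\Big(\phi(x^0)-\phi(\hat x)+\tfrac{\mu}{2}\|x^0-\hat x\|^2\Big).
\]
Next I bound the initial gap by $L$-smoothness, using $\nabla\phi(\hat x)=0$: $\phi(x^0)-\phi(\hat x)\le\frac L2\|x^0-\hat x\|^2$. On the left, strong convexity gives $\frac\mu2\|x^j-\hat x\|^2\le\phi(x^j)-\phi(\hat x)$. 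Combining these with $(1-s)^j\le e^{-js}$ yields \eqref{eq:iteration-complexity}.

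The main obstacle is making sure the cited rate applies verbatim to the constant-momentum variant with $x^0=x^{-1}$, and that the constants come out as $(L+\mu)/\mu$ rather than some other factor. If the exact Nesterov statement differs, I would rederive it via the Lyapunov function $\phi(x^j)-\phi(\hat x)+\frac\mu2\|v^j-\hat x\|^2$, with $v^j=x^{j-1}+\sqrt{L/\mu}\,(x^j-x^{j-1})$, which contracts by $1-\sqrt{\mu/L}$ per step.

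For step (iii), I require the right-hand side of \eqref{eq:iteration-complexity} to be at most $\delta^2$. This is equivalent to
\[
j\sqrt{\mu/L}\ge\ln\!\Big(\frac{L+\mu}{\mu\delta^2}\|x^0-\hat x\|^2\Big),
\]
which holds for every $j>N$. Hence $\|x^j-\hat x\|\le\delta$, i.e., $x^j\in\mathbb B(\hat x,\delta)$, the neighborhood of Theorem \ref{DINN convergence}.
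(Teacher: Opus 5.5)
Your proposal is correct and follows essentially the same route as the paper. The paper cites Nesterov (Theorem 2.1.5, Theorem 2.2.1, Lemma 2.2.4) to get $\phi(x^j)-\phi(\hat x)\le\frac{L+\mu}{2}\|x^0-\hat x\|^2e^{-j\sqrt{\mu/L}}$, then applies $\frac{\mu}{2}\|x^j-\hat x\|^2\le\phi(x^j)-\phi(\hat x)$. Your explicit identification of Algorithm~\ref{alg:DDFGM} with the centralized accelerated iteration on $\phi$ is something the paper leaves implicit, and your reading of $u_i^j=\sigma\tilde x_i^j-\lambda_i$ and $x_i^{j-1}$ as the intended quantities is right.
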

\begin{proof} 
From Theorem 2.1.5, Theorem 2.2.1 and Lemma 2.2.4 in \cite{Nesterov2018}, we can obtain
\begin{equation}
    \phi(x^j)-\phi(\hat{x})\leq\frac{L+\mu}{2}\|x^0-\hat{x}\|^2e^{-j\sqrt{\frac{\mu}{L}}}.\label{pf1}
\end{equation} 
Since the function \(\phi({x})\) is \(\mu\)-strongly convex and differentiable, we have
\begin{equation}
    \frac{\mu}{2}\|x^j-\hat{x}\|^2\leq \phi(x^j)-\phi(\hat{x}).\label{pf2}
\end{equation} 

Combining \eqref{pf1} with \eqref{pf2}, we get \eqref{eq:iteration-complexity} and the estimation of the iteration number.
\end{proof} 

\begin{remark}
Although we have given the iteration complexity in theory, the parameters $\hat{x}$ and $\delta$ are unknown. Therefore, in practical computation, we may set an appropriate accuracy and maximum iteration number for Algorithm \ref{alg:DDFGM} according to practical problems.   
\end{remark}

\subsection{Restatement of ALM}
\label{subsec:restatement of ALM}
While we have previously introduced ALM, %in Algorithm \ref{alg:SSNAL},
it does not reflect the strategy of distributed computation. Now, by combining Algorithm %\ref{alg:SSNAL}, \ref{alg:distributed APG} and 
\ref{alg:SSN}, we present ALM with the distributed computation details, called the DSSNAL method at the $i$-th agent.

\begin{algorithm}
\caption{DSSNAL method for problem \eqref{eq:problemBx=y} at the $i$-th agent} \label{alg:DSSNAL}
Given  $\sigma_0 > 0$. Initialize ${x}_{i}^{0} \in \mathbb{R}^{n}$, $\lambda_{i}^{0} \in \mathbb{R}^{n}$ and $\lambda_{i+m}^{0} \in \mathbb{R}^{n}$. For $k=0,1,\cdots$, iterate:
\begin{description}
    \item [Step 1.] \ Compute $x_i^{k+1}$ by Algorithm \ref{alg:SSN}.
    \item [Step 2.]\ Compute 
    \begin{align*}
    u_i^{k+1} &= \sigma_{k+1} x_i^{k+1} - \lambda_i^k, \\
    u_{i+m}^{k+1} &= \sigma_{k+1} \sum_{t \in \mathcal{N}_i} L_{it} {x}_t^{k+1} - \lambda_{i+m}^k,  
    \end{align*}
share  $u_{i+m}^{k+1}$ with neighbors in $\mathcal{N}_i$.
    \item [Step 3.] \ Compute 
    \begin{align*}
    \lambda^{k+1}_i &=-\mathrm{Prox}_{\sigma_{k+1}g^*_i}(u_i^{k+1}),\\
    \lambda^{k+1}_{i+m} &=-\sum_{t \in \mathcal{N}_i} L_{it} u^{k+1}_{t+m}.
    \end{align*}

    \item [Step 4.]\ Update $\sigma_{k+1}$, satisfying $0 \leq \sigma_k\uparrow\sigma_\infty \leq \infty$. If the desired stopping criterion is satisfied, terminate; otherwise, set $k:=k+1$ and go to Step 1.
\end{description}
\end{algorithm}

\section{Numerical experiments}
\label{sec:Numerical experiments}

In this section, we conduct numerical experiments to assess the performances of two algorithms. We compare the DSSNAL method with the FDPG method \cite{chen2012fast} and the $\mathrm{Prox}$-NIDS method \cite{JinmingDistributed}. The algorithms are tested both on the random data and the real data from the UCI-data set \cite{mj07-xa20-26}.  All experiments are implemented in {\sc Matlab} R2018b on a PC with Intel Core i5-10200H processors (2.40GHz) and 16 GB of RAM.

We terminate the algorithms when
\begin{equation*}
    R_{{\rm KKT}}:=\frac{\|Wx\|+\|x-\mathrm{Prox}_{\widetilde{G}}(x-A\nabla F(x))\|}{1+\|x\|}<10^{-6},
\end{equation*}
where $A=\frac{1}{m} \left( \mathbf{1}_{m \times m} \otimes I_n \right)$ and $\widetilde{G}(y):=\sum\limits_{i=1}^{m}g_i(y_i)$ with $y=[y_1^T,\dots,y_m^T]^T\in \mathbb{R}^{mn}$. The algorithm also terminates if the number of iterations reaches the maximum $N_{\mathrm{max}}$.
 For the DSSNAL method, we set $N_{\mathrm{max}}$ to be $100$; for the FDPG method, we set $N_{\mathrm{max}}$ to be 300000; for the $\mathrm{Prox}$-NIDS method, we set $N_{\mathrm{max}}$ to be 60000. In addition,
for the DSSNAL method, we call Algorithm \ref{alg:DDFGM} to obtain an initial point. We terminate  Algorithm \ref{alg:DDFGM} when
\begin{equation*}
    \frac{\|\nabla\phi(x)\|}{1+\|x\|}\leq 5\times10^{-1}.
\end{equation*}

%Besides, the network consists of \(m=50\) agents. Each edge is uniformly and randomly assigned to two agents while maintaining network connectivity. 

%Finally, in all numerical experiments, we standardize the dataset using Z-score normalization. Also referred to as standardization, this method transforms the data to have a mean of 0 and a standard deviation of 1.

\subsection{Huber regression problem}\label{Huber}

In this subsection, we compare the algorithms on the Huber regression problem 
\begin{equation*}
\label{eq:Huber regression}
  \min_{w\in\mathbb{R}^n}\sum\limits_{i=1}^{S}\Big\{f_{i}(w)+g_{i}(w)\Big\}  
\end{equation*}
with
\begin{align*}
f_i(w) &= \sum\limits_{j\in J_i}l_h^{\nu}(a_j^Tw-b_j)+\frac{\rho}{2m}\|w\|^2, \\ 
g_i(w) &= \frac{1}{m}\gamma\|w\|_1,
\end{align*}
where $S$ denotes the total number of functions $f_i(w)$ and $g_i(w)$, \(\{a_j\}_{j=1}^S\subseteq  \mathbb{R}^n\), \(\{b_j\}_{j=1}^S\subseteq \mathbb{R}\), $\rho, \gamma>0$ are given parameters, $m$ denotes the total number of agents, \(J_i\) collects the indices of the data points assigned to the $i$-th agent $(i=1\dots m)$, and
\begin{equation*}
   l_h^{\nu}(t) =  
\begin{cases} 
\frac{1}{2\nu} t^2, & \text{if } |t| \leq \nu, \\|t| - \frac{\nu}{2}, & \text{otherwise.}
\end{cases} 
\end{equation*}
% Furthermore, we assign the data uniformly and randomly to each agent. 

 Note that \(f_i\) is strongly convex and continuously differentiable and \(g_i\) is convex with
\begin{align*}
    \nabla f_i(w)&=\frac{1}{\nu}\sum\limits_{j \in J_i}\mathcal{T}_{\nu}(a_j^Tw-b_j)a_j+\frac{\rho}{m}w
\end{align*}
and
\begin{align*}
       \mathrm{Prox}_{\sigma g_i^*}(w)&=(\mathcal{T}_{\frac{\gamma}{m}}(w_1),\dots,\mathcal{T}_{\frac{\gamma}{m}}(w_n))^T,
\end{align*}
 where \(\mathcal{T}_{\nu}: \mathbb{R}\to  \mathbb{R}\) is defined as \(\mathcal{T}_{\nu}(t):=\mathrm{sgn}(t)\mathrm{min}\{|t|,\nu\}\). Additionally, the Clarke subdifferentials of \(\nabla f_i\) and \(\mathrm{Prox}_{\sigma g_i^*}\) are
\begin{align*}
  \partial\nabla f_i(w)&=\frac{1}{\nu}\sum\limits_{j \in J_i}a_ja_j^T\mathcal{D}^{\nu}(a_j^Tw-b_j)+\frac{\rho}{m}I_n
  \end{align*}
  and
  \begin{align*}
  \partial \mathrm{Prox}_{\sigma g_i^*(w)}&=\begin{pmatrix}
         \mathcal{D}^{\frac{\gamma}{m}}(w_1)&&&\\&&\ddots\\ &&&\mathcal{D}^{\frac{\gamma}{m}}(w_1)\end{pmatrix}, 
\end{align*}
where \(\mathcal{D}^{\nu}: \mathbb{R}\to  \mathbb{R}\) is defined as

 \begin{align}
    \mathcal{D}^{\nu}(t):= \begin{cases}       
    1, & \text{if } |t| < \nu, \\0, & \text{if } |t| > \nu, \\ [0,1], & \text{if } |t| = \nu.\label{eq:D} 
\end{cases} 
 \end{align}

\subfile{table-compare-table1.dat}

\subfile{table-compare-table2.dat}

\subfile{table-compare-table3.dat}

\subfile{table-compare-table4.dat}

In practical computation, we set $\nu=1$, $\rho=1$, and $m=50$. The parameter $\gamma$ is obtained via the five-fold cross validation. In the numerical results, we report the data set name (dataname), the relative KKT residual ($R_{\rm KKT}$), the computing time (time), the iteration number (iter), the value of the objective function (obj), the number of agents ($m$), the dimension of the variables ($n$), the number of functions ($S$). We present a number `$s\times 10^t$' in the format of `$s\  \mathrm{sign}(t)|t|$', e.g., $1.0\text{-}4$ denotes $1.0\times10^{-4}$. The computing time is in the format of `hours:minutes:seconds'. For the DSSNAL method, we present the iteration number in the format of $s_1(s_2)$, where $s_1$ denotes the iteration number of the DSSNAL method and $s_2$ denotes the iteration number of the DAPG method. Here, the index $s_2$ is very important since it is closely related to the number of communications.

For the random data problems, the matrix $A:=(a_1,a_2,\dots,a_S)\in\mathbb{R}^{n\times S}$  and the vector $b:=(b_1,b_2,\dots,b_S)^T\in\mathbb{R}^S$ are generated by the following code:
\begin{verbatim}
b = rand(S,1);  A = randn(n,S);
min_val = min(A,[],1); 
max_val = max(A,[],1);
val_ranges = max(max_val-min_val, 1e-10);
A = (A-min_val)./val_ranges;
\end{verbatim}
The matrix \( L \) defined in Definition \ref{defL} is generated by the following code:
\begin{verbatim}
 e = ones(m,1); Q = null(e'); L = Q*Q';
\end{verbatim}
We present the results in Table I. It is evident that the DSSNAL method not only attains the desired accuracy, but also demonstrates its superiority in efficiency compared to the FDPG method and the $\mathrm{Prox}$-NIDS method. For example, for the problem `rand(20,4000)', the DSSNAL method takes less than 2 minutes to achieve the required accuracy. In contrast, the $\mathrm{Prox}$-NIDS method  and the FDPG method fail.

For the UCI-real data problems, we standardize the dataset using the Z-score normalization, which transforms the data to have a mean of 0 and a standard deviation of 1. For the real data problems, we present the results in Table II. From the table, we can see that among the three algorithms, only the DSSNAL method attains the desired accuracy for all the test problems. Meanwhile, the DSSNAL method takes much less time than the other two algorithms.

\subsection{Support vector classification problem} \label{subsec:svc}

In this subsection, we compare the algorithms on the support classification problem
\begin{equation*}
\label{eq:svc}
  \min_{w\in\mathbb{R}^n}\sum\limits_{i=1}^{S}\Big\{f_{i}(w)+g_{i}(w)\Big\}  
\end{equation*}
with
\begin{align*}
    f_i(w)&=C\sum\limits_{j\in J_i}\left( \mathrm{max} \big( 0,1 - b_{j} a_{j}^{T} {w} \big) \right)^{2}+\frac{\rho}{2m}\|w\|^2,\\
      g_i(w)&=\frac{1}{m}\gamma\|w\|_1,
\end{align*}
where \(\{a_j\}_{j=1}^S\subseteq  \mathbb{R}^n\), \(\{b_j\}_{j=1}^S\subseteq \{-1,+1\}\), \(C, \rho>0\) are given parameters, \(J_i\) collects the indices of the data points assigned to the $i$-th agent. Similarly, \(f_i\) is strongly convex and continuously differentiable and \(g_i\) is convex with
\begin{align*}
    \nabla f_i(w)&=-2C\sum\limits_{j\in J_i}\left( \mathrm{max} \big( 0,1 - b_{j} a_{j}^{T} {w} \big) \right)b_ja_j+\frac{\rho}{m}w
\end{align*}
and
\begin{align*} 
       \mathrm{Prox}_{\sigma g_i^*(w)}&=(\mathcal{T}_{\frac{\gamma}{m}}(w_1),\dots,\mathcal{T}_{\frac{\gamma}{m}}(w_n))^T,
\end{align*}where \(\mathcal{T}_{\nu}(t)=\mathrm{sgn}(t)\min\{|t|,\nu\}\). Additionally, the Clarke subdifferentials of \(\nabla f_i\) and \(\mathrm{Prox}_{\sigma g_i^*}\) are 
\begin{align*}
     \partial\nabla f_i(w)&=2C \sum_{j \in {J}_i} {a}_j {a}_j^T \mathcal{P}(1 - b_j {a}_j^T{w})+\frac{\rho}{m}I_n 
\end{align*}
and
\begin{align*} 
       \partial \mathrm{Prox}_{\sigma g_i^*(w)}&=\begin{pmatrix} \mathcal{D}^{\frac{\gamma}{m}}(w_1)&&&\\&&\ddots\\ &&&\mathcal{D}^{\frac{\gamma}{m}}(w_1)\end{pmatrix},
\end{align*}
where \(\mathcal{D}^{\nu}\) is defined in \eqref{eq:D} and the multifunction \(\mathcal{P}: \mathbb{R}\to  \mathbb{R}\) is the Clarke subdifferential of \(\max(0,t)\), i.e., 
 \begin{equation} 
    \mathcal{P}(t) = 
    \begin{cases} 1, & \text{if } t > 0; \\0, & \text{if } t < 0; \\ [0,1], & \text{if } t = 0.
    \end{cases}
\end{equation} 

In practical computation, we set $\rho=1$, $m=50$. The parameter $\gamma$ and $C$ are obtained via the five-fold cross validation.

Tables III and IV present the performance comparisons of the three algorithms on a selection of  support vector classification problems. Table III is for the random data problems and Table IV is for the UCI-data problems. The results indicate that  similar to the previous subsection, the DSSNAL method requires much less time to meet the termination criterion  compared to the other algorithms. Additionally, we observe that in some datasets, the other algorithms fail to achieve the desired accuracy.

\section{Conclusion}
\label{sec:conclusion}
This paper has presented a DSSNAL method for solving decentralized optimization problems over networks. By reformulating the original problem with local variables and consensus constraints, and solving the resulting subproblems via the DiSSN method, the proposed approach efficiently balances computation and communication. The employment of the DAPG method further avoids full Hessian communication, enhancing scalability. Theoretical convergence guarantees have been established, and numerical experiments demonstrate the algorithm's superior performance compared to existing distributed algorithms.
\bibliographystyle{IEEEtran}
\medskip
\bibliography{reference}
\end{document}